\documentclass{article}
\usepackage{amsmath,amssymb,amsthm}
\usepackage[cp1250]{inputenc}
\usepackage{enumerate}
\usepackage{a4wide}
\usepackage{xcolor}

\numberwithin{equation}{section}

\usepackage{graphicx}
\usepackage{subfig}

\newcommand{\ep}{\varepsilon}
\newcommand{\ept}{\tilde{\varepsilon}}
\newcommand{\Et}{\tilde{E}}
\newcommand{\id}{\mathbb{I}}
\newcommand{\Sz}{\mathcal{S}^{2\times 2}_0}
\newcommand{\R}{\mathbb{R}} 
\newcommand{\U}{\mathbb{U}}
\newcommand{\Div}{{\rm div}_\vx} 
\newcommand{\vr}{\varrho}
\newcommand{\half}{\frac{1}{2}} 
\newcommand{\vc}[1]{{\bf #1}}
\newcommand{\vv}{\vc{v}}
\newcommand{\vx}{\vc{x}} 
\newcommand{\va}{\vc{a}} 
\newcommand{\Grad}{\nabla_\vx}
\newcommand{\dx}{\,{\rm d}\vx}
\newcommand{\dt}{\,{\rm d}t}
\newcommand{\intx}[1]{\int_{\R^2}#1\dx}
\newcommand{\intt}[1]{\int_0^\infty #1\dt}
\newcommand{\DC}{C^\infty_{\rm c}} 
\newcommand{\bfphi}{\boldsymbol{\varphi}}
\newcommand{\ov}[1]{\overline{#1}} 
\newcommand{\un}[1]{\underline{#1}} 
\newcommand{\til}[1]{\widetilde{#1}} 
\newcommand{\new}{{\rm new}}
\newcommand{\trans}{\top}

\newtheorem{theorem}{Theorem}[section]
\newtheorem{lemma}[theorem]{Lemma}
\newtheorem{definition}[theorem]{Definition}
\newtheorem{propo}[theorem]{Proposition}
\newtheorem{remark}[theorem]{Remark}

\title{Shocks Make the Riemann Problem for the Full Euler System in Multiple Space Dimensions Ill-Posed}
\author{Christian Klingenberg$^2$\footnote{klingen@mathematik.uni-wuerzburg.de} \and Ond\v rej Kreml$^1$\footnote{kreml@math.cas.cz} \and
V\'aclav M\'acha$^1$\footnote{macha@math.cas.cz} \and Simon Markfelder$^2$\footnote{simon.markfelder@mathematik.uni-wuerzburg.de}}

\date{}

\begin{document}

\maketitle

\medskip

\centerline{$^1$Institute of Mathematics of the Czech Academy of Sciences}

\centerline{\v Zitn\' a 25, CZ-115 67 Praha 1, Czech Republic}
\medskip

\centerline{$^2$Department of Mathematics, W\"urzburg University}

\centerline{Emil-Fischer-Str. 40, 97074 W\"urzburg, Germany}

\bigskip

\textbf{Abstract:} The question of (non-)uniqueness of one-dimensional self-similar solutions to the Riemann problem for hyperbolic systems of gas dynamics in the class of multi-dimensional admissible weak solutions was addressed in recent years in several papers culminating in \cite{MarKli17} with the proof that the Riemann problem for the isentropic Euler system with a power law pressure is ill-posed if the one-dimensional self-similar solution contains a shock. Then the natural question arises whether the same holds also for a more involved system of equations, the full Euler system. After the first step in this direction was made in \cite{AKKMM}, where ill-posedness was proved in the case of two shocks appearing in the self-similar solution, we prove in this paper that the presence of just one shock in the self-similar solution implies the same outcome, i.e. the existence of infinitely many admissible weak solutions to the multi-dimensional problem.


\section{Introduction}

The full compressible Euler system in the whole two-dimensional space can be written as a system of partial differential equations 
\begin{equation}\left.
	\begin{split}
		\partial_t \vr + \Div (\vr\vv) &= 0 \\
		\partial_t (\vr\vv) + \Div(\vr\vv\otimes \vv)+\Grad p  &= 0 \\
		\partial_t \bigg(\half\vr|\vv|^2 + \vr e(\vr,p)\bigg) + \Div \bigg[\bigg(\half\vr|\vv|^2 + \vr e(\vr,p) + p\bigg)\vv\bigg] &= 0
	\end{split}
	\right\} \text{ in } (0,\infty) \times \R^2,
	\label{eq:euler}
\end{equation}
with the unknown density $\vr=\vr(t,\vx)\in\R^+$, pressure $p=p(t,\vx)\in\R^+$ and velocity field $\vv=\vv(t,\vx)\in\R^2$. The independent variables here are the time $t\in[0,\infty)$ and the position $\vx=(x,y)\in\R^2$. 

Throughout this paper we consider an ideal gas, i.e. the relation between the internal energy $e(\vr,p)$ and the density and the pressure is given by
\[
e(\vr,p) = c_v \frac{p}{\vr},
\]
where $c_v>0$ is a constant called the \emph{specific heat at constant volume}. The principle of equipartition of energy predicts $c_v=\frac{f}{2}$, where $f$ is the number of degrees of freedom of the gas particles.

We complement the system of equations \eqref{eq:euler} with the initial condition
\begin{equation}\label{eq:initial}
(\vr,\vv,p)(0,\vx) = (\vr^0,\vv^0,p^0)(\vx) \qquad \text{ in } \R^2.
\end{equation}
Moreover, we add to the Euler system \eqref{eq:euler} the entropy condition 
\begin{equation} \label{eq:entropy}
	\partial_t\Big(\vr s(\vr,p)\Big) + \Div\Big(\vr s(\vr,p)\vv\Big) \geq 0,
\end{equation}
where $s(\vr,p)$ denotes the (physical) entropy, which for the ideal gas takes the form
\begin{equation}\label{eq:entropyex}
s(\vr,p)=\log\Big(\frac{p^{c_v}}{\vr^{c_v+1}}\Big).
\end{equation}
Let us note here that in the terminology of hyperbolic conservation laws, the (mathematical) entropy is the quantity $-\vr s(\vr,p)$. We call weak solutions to the Euler system \eqref{eq:euler}-\eqref{eq:initial} \emph{admissible}, if they satisfy the entropy inequality \eqref{eq:entropy}. For a detailed definition of an \emph{admissible weak solution} see Section \ref{ss:weak}.

We remark that it is possible to formulate the system \eqref{eq:euler} in terms of other unknown variables, for example with temperature $\vartheta$ instead of pressure $p$. We choose to work with the set of unknowns as is formulated in \eqref{eq:euler} mainly because of its convenience with respect to the related 1D theory. It is however clear that the temperature can be always easily reconstructed from the pressure and the density using the relation $\vartheta = \frac{p}{\varrho}$, because throughout this paper we always assume that the density $\vr$ is strictly positive.

We study the Riemann problem for the Euler system \eqref{eq:euler}-\eqref{eq:entropy}, i.e. we assume that the initial data take the form
\begin{equation}
	\begin{split}
		(\vr^0,\vv^0,p^0)(\vx):=\left\{
		\begin{array}[c]{ll}
			(\vr_-,\vv_-,p_-) & \text{ if }y<0 \\
			(\vr_+,\vv_+,p_+) & \text{ if }y>0
		\end{array}
		\right. ,
	\end{split}
\label{eq:Riemann}
\end{equation}
where $\vr_\pm\in\R^+$, $\vv_\pm\in\R^2$ and $p_\pm\in\R^+$ are constants. We write $\vv=(u,v)^\trans$ for the components of the velocity $\vv$. 

The Riemann problem is an important building block of the 1D theory of systems of hyperbolic conservation laws and hence it is discussed in detail in classical monographs in the field, e.g. \cite{Dafermos16}. It is well-known, that in one space dimension this problem admits self-similar solutions (i.e. solutions which depend on a single variable $\xi = \frac{x}{t}$) consisting of constant states connected by rarefaction waves, admissible shocks or contact discontinuities. When the Riemann problem is considered in multiple space dimensions, the one-dimensional self-similar solution still solves the appropriate system of equations in multi-D. However, additional (and in particular genuinely multi-dimensional) admissible solutions may arise.

The question whether one-dimensional self-similar solutions are unique in the class of multi-dimensional admissible weak solutions of the appropriate system has been studied in several previous works. Chen and Chen \cite{Chen} proved that for the isentropic Euler system as well as for the full Euler system solutions consisting only of rarefaction waves are indeed unique in the class of admissible weak solutions to the multi-D problem. Later on similar results were proved in \cite{FeiKre} in the case of the isentropic Euler system and \cite{FeiKreVas} in the case of the full Euler system.

On the non-uniqueness side, the $L^\infty$ convex integration theory developed by De Lellis and Sz\'ekelyhidi \cite{DLSz1}, \cite{DLSz2} was first used in the context of piecewise constant initial data in the work of Sz\'ekelyhidi \cite{sz} with the following result: Vortex sheet initial data for the incompressible Euler equations allow for the existence of infinitely many bounded weak solutions. For the isentropic compressible Euler system, Chiodaroli, De Lellis and Kreml \cite{ChiDelKre15} applied the above mentioned theory and showed the existence of Riemann initial data for which there exist infinitely many bounded admissible weak solutions. Moreover, these initial data were generated by a compression wave, thus proving the existence of Lipschitz initial data allowing for existence of infinitely many bounded admissible weak solutions. Ill-posedness of the isentropic Euler system with smooth initial data was proved in \cite{CKMS}, which concluded the line of research which studied the question of smoothness of initial data allowing for existence of infinitely many admissible solutions, see also \cite{DLSz2}, \cite{Ch} and \cite{Fe}. 

The study of the Riemann problem for the isentropic Euler system with a power law pressure continued with a series of papers \cite{ChiKre14}, \cite{ChiKre17}, \cite{MarKli17} and \cite{BrChKr}, ultimately concluding that whenever the Riemann initial data for the isentropic Euler system are such that the one-dimensional self-similar solution contains a shock, the problem is ill-posed and admits infinitely many bounded admissible weak solutions in more than one space dimension. Moreover, in \cite{MaKl2} it was shown that one can construct infinitely many admissible weak solutions satisfying the energy equality instead of just the energy inequality.

Interestingly, the question whether a self-similar solution to the isentropic Euler system with power law pressure consisting of a single contact discontinuity, i.e. jump in the first component of the velocity, is unique in the set of admissible multi-dimensional weak solutions remains to be open. This problem is a direct generalization of the above mentioned result of Sz\'ekelyhidi \cite{sz}. Non-uniqueness of solutions consisting of contact discontinuities was addressed in \cite{BrKrMa} for the isentropic Euler equations with the Chaplygin gas pressure law, which is a linearly degenerate hyperbolic system of conservation laws allowing only for contact discontinuities to appear in self-similar solutions. 

As already mentioned, in \cite{MaKl2} infinitely many admissible and energy conserving weak solutions for the isentropic Euler equations were constructed, which therefore solve the full Euler equations as well. Hence it is natural to turn the attention to the question of non-uniqueness of solutions for the full Euler system. This question was first studied in \cite{FeKlKrMa}, where it was shown that for piecewise constant initial density and temperature there exists a bounded initial velocity field allowing for existence of infinitely many solutions. The non-uniqueness of self-similar solutions to the Riemann problem for the full Euler system in multiple space dimensions was first studied in \cite{AKKMM}, where the authors proved that if the Riemann initial data produce a self-similar solution containing two shocks, there exist infinitely many additional admissible weak solutions to the multi-D problem.

In this paper we continue the research started in \cite{AKKMM} and prove the analogue of the result in \cite{MarKli17}, namely that the Riemann problem for the full Euler system is ill-posed whenever the self-similar solution contains a shock. Our main theorem reads as follows.

\begin{theorem}\label{t:main}
Let $c_v > \frac 12$. Let $(\vr_-,\vv_-,p_-)$ and $(\vr_+,\vv_+,p_+)$ be Riemann initial data as in \eqref{eq:Riemann}, such that the 1D self-similar solution to the Riemann problem \eqref{eq:euler}-\eqref{eq:Riemann} contains a shock. Then there exist infinitely many bounded admissible weak solutions to the Riemann problem for the problem \eqref{eq:euler}-\eqref{eq:Riemann}. 
\end{theorem}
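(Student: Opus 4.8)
The plan is to follow the now-standard convex-integration route to ill-posedness, transplanting the single-shock argument of \cite{MarKli17} from the isentropic setting to the full system \eqref{eq:euler} and extending the two-shock construction of \cite{AKKMM}. The solution I build will agree with the one-dimensional self-similar solution everywhere except in a neighbourhood of one chosen shock, which gets replaced by a genuinely two-dimensional fan carrying a convex-integration region. The first ingredient is the reduction itself. In the region where the oscillations live I fix the density $\vr\equiv\vr_1$ and the pressure $p\equiv p_1$ to be constant and I prescribe the kinetic energy $\half\vr_1|\vv|^2$ to equal a constant $C_1$ almost everywhere. Under these constraints the continuity equation collapses to $\Div(\vr_1\vv)=0$, the momentum equation becomes the incompressible-Euler-type relation $\partial_t(\vr_1\vv)+\Div\U=0$ for some $\U\in\Sz$ with $\half\vr_1|\vv|^2$ playing the role of the effective pressure, and both the energy balance and the entropy inequality \eqref{eq:entropy} hold as \emph{equalities} (because $\vr_1$, $p_1$ and $\half\vr_1|\vv|^2$ are constant while $\Div\vv=0$). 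Thus the interior problem is exactly the one handled by the $L^\infty$ theory of De Lellis and Sz\'ekelyhidi \cite{DLSz1,DLSz2}, and I would state and invoke the corresponding proposition in the form used in \cite{AKKMM}: the existence of an admissible fan subsolution, i.e.\ piecewise-constant data of the above type with the \emph{strict} bound $\half\vr_1|\vv_1|^2<C_1$ in the middle sector together with the correct Rankine--Hugoniot and entropy conditions across the fan interfaces, yields infinitely many bounded admissible weak solutions.

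Next comes the fan construction. Using the reflection symmetry $y\mapsto-y$ (together with $v\mapsto-v$) of \eqref{eq:euler}--\eqref{eq:Riemann}, I may assume the chosen shock is the $1$-shock, connecting the left state $(\vr_-,\vv_-,p_-)$ to the adjacent intermediate state, which I denote $(\vr_m,\vv_m,p_m)$, and travelling with speed $\sigma$; every other wave of the self-similar solution (a rarefaction, the contact discontinuity, or a second shock) is kept unchanged and glued to the new piece. In the self-similar variable $\xi=y/t$ I replace the single line $\xi=\sigma$ by two lines $\xi=\nu_0<\nu_1$ bounding a middle sector that carries the convex-integration state $(\vr_1,p_1,\vv_1)$, with $\vv_1=(u_1,v_1)^\trans$, a matrix $\U\in\Sz$, and the energy level $C_1$. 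Writing the weak formulation across $\xi=\nu_0$ and $\xi=\nu_1$ produces, at each interface, the four Rankine--Hugoniot equations (continuity, both momentum components, energy) and one entropy inequality, supplemented by the positivity $\vr_1,p_1>0$ and the strict subsolution inequality $\half\vr_1|\vv_1|^2<C_1$. Collecting these turns the existence of an admissible fan subsolution into a finite algebraic system in the unknowns $\vr_1,p_1,u_1,v_1,\nu_0,\nu_1,C_1$ and the entries of $\U$.

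The heart of the matter is showing this system is solvable for every admissible $1$-shock. The strategy is a perturbation off the shock itself: the degenerate choice $\nu_0=\nu_1=\sigma$, $u_1=u_-$, $\half\vr_1|\vv_1|^2=C_1$ with $(\vr_1,p_1,v_1)$ equal to the post-shock state recovers exactly the admissible shock and satisfies all the Rankine--Hugoniot equalities, while the two entropy inequalities hold \emph{strictly}, since a genuine shock is strictly entropy-dissipative. I would then introduce an amplitude $\delta$ measuring the tangential-velocity oscillation in $u$, which is the free direction because $u$ does not jump across the normal $1$-shock, and open the sector so that $\nu_1-\nu_0=O(\delta)$. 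The strict dissipation slack of the shock is precisely what opens the gap $C_1-\half\vr_1|\vv_1|^2>0$ needed for a bona fide subsolution: the extra kinetic energy $\tfrac12\vr_1\delta^2$ carried by the oscillations is absorbed without violating the entropy inequalities at the two interfaces for small $\delta$. Solving the two Rankine--Hugoniot systems for the remaining unknowns as functions of $\delta$ via the implicit function theorem (after checking the relevant Jacobian is nondegenerate) and tracking the inequalities then yields a one-parameter family of admissible fan subsolutions, which by the proposition above gives infinitely many solutions.

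The principal obstacle I expect is exactly this last step, and in two respects. First, one must verify that all the constraints—two $4\times4$ jump systems, two entropy inequalities, positivity, and the strict energy gap—can hold \emph{simultaneously}, and, crucially, for shocks of arbitrary strength rather than only weak ones; the purely perturbative argument only guarantees feasibility near the degenerate configuration, so covering all admissible shocks requires a quantitative control of how the entropy-dissipation slack and the Rankine--Hugoniot solution manifold behave as the shock strength varies. This is where the hypothesis $c_v>\half$, equivalently the adiabatic exponent $\gamma=1+\tfrac1{c_v}<3$, should enter, fixing the sign and the size of the decisive thermodynamic quantities. Second, and genuinely new compared with the isentropic theory of \cite{MarKli17}, is the coupling of the extra energy and entropy balances: the intermediate pressure $p_1$ and density $\vr_1$ are additional free parameters that must be chosen so that both the energy jump condition and the entropy inequality are met with room to spare, and disentangling this coupling from the momentum conditions is the main technical difficulty of the argument.
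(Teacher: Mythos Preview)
Your outline follows the isentropic template of \cite{MarKli17} faithfully, but the paper's proof takes a substantially different route, and the discrepancy is not cosmetic. You propose to replace the chosen $1$-shock by a \emph{single} convex-integration sector bounded by two interfaces $\nu_0<\nu_1$, and to perturb off the degenerate configuration $\nu_0=\nu_1=\sigma$ using the entropy-dissipation slack. The paper instead builds a fan with \emph{three} interfaces $\mu_0<\mu_1<\mu_2$ and \emph{two} oscillatory sectors $\Omega_1,\Omega_2$, and---this is the counterintuitive point---places the contact discontinuity at the \emph{rightmost} interface $\mu_2$, not in the middle. The authors remark explicitly (Remark~\ref{r:CD2}) that despite their efforts they could not make the natural ansatz (contact in the middle) work in the one-shock case; your single-sector ansatz is even more constrained, so there is no reason to expect it to fare better.

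The concrete obstruction is the one you flag but do not resolve. In the isentropic problem the energy balance is an \emph{inequality}, so the perturbation has slack in two places (entropy and energy), and a single free parameter $\delta$ suffices. Here the energy balance is an \emph{equality}: with one sector you have eight Rankine--Hugoniot equations and nine unknowns, hence a one-parameter family near the shock, but you must simultaneously open the strict subsolution gap $C_1-|\vv_1|^2>0$ \emph{and} keep both entropy jumps non-negative. Whether the one-dimensional solution manifold points in a direction that achieves this is exactly the computation you defer, and the paper's experience suggests it does not go through in this form. The paper bypasses the implicit-function-theorem step entirely: it writes down explicit formulas \eqref{eq:betasol}--\eqref{eq:ept2sol} for all unknowns as functions of two density parameters $\vr_1,\vr_2$, then verifies the inequalities by hand near a reference value $\vr_K$. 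This yields only a \emph{smallness} result (Theorem~\ref{t:small}), valid when $\vr_-v_-^2$ is close to a critical threshold; the restriction is then removed not by ``covering all shock strengths'' directly, but by inserting an auxiliary state $(\vr^\delta,\vv^\delta,p^\delta)$ on the $3$-wave curve of $(\vr_+,\vv_+,p_+)$ so that the smallness hypothesis applies to the pair $(\vr_-,\vv_-,p_-)$, $(\vr^\delta,\vv^\delta,p^\delta)$, and then patching with the residual $3$-rarefaction or $3$-shock. The hypothesis $c_v>\tfrac12$ enters only at the very end of the smallness proof, as the sign of a single explicit scalar $\Et$.
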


\begin{remark}
	Throughout this paper we will always assume that $c_v>\frac 12$. This is justified since the number of degrees of freedom in a two-dimensional gas should be larger or equal 2. In particular we have $f>1$ and hence $c_v = \frac{f}{2}>\frac 12$. We will make use of this assumption at the end of the proof of Theorem \ref{t:small} in Section \ref{subsec:subsol_cond_2}. We were not able to get rid of this assumption.
\end{remark}

As in previous results concerning non-uniqueness of weak solutions to the Riemann problem, the proof of Theorem \ref{t:main} is based on the $L^\infty$ convex integration theory of De Lellis and Sz\'ekelyhidi \cite{DLSz1}, \cite{DLSz2} formulated for our purposes in Lemma \ref{l:key}. Fan subsolutions (see Definition \ref{d:ss}) are then designed in such a way that existence of a single fan subsolution implies directly the existence of infinitely many admissible weak solutions. The main task is then to prove existence of such a fan subsolution. Somewhat surprisingly it turns out that a general ansatz leading to the proof of Theorem \ref{t:main} under a certain smallness assumption, consists of setting up a fan subsolution with three interfaces, where the interface mimicking the contact discontinuity is not the middle one, as is the case in the self-similar solution, but the right one. For more details see Sections \ref{ss:ansatz} and \ref{s:conclude}. The final argument of the proof of Theorem \ref{t:main} is the patching procedure introduced in \cite{MarKli17}.

The paper is structured as follows. In Section \ref{s:Prel} we introduce all necessary preliminary material including the structure of 1D self-similar solutions, definitions of admissible weak solutions and admissible fan subsolutions, we provide Proposition \ref{p:ss} relating the existence of a single subsolution to the existence of infinitely many admissible weak solutions and present principles of invariance of Euler equations which allow us to work in a somewhat simplified setting. In Section \ref{s:small} we prove a smallness result which is the key building block in the proof of Theorem \ref{t:main}. The complete proof of Theorem \ref{t:main} is then the content of Section \ref{s:proof_main}. Finally, in Section \ref{s:conclude} we present some concluding remarks.

\section{Preliminaries}\label{s:Prel}

\subsection{Admissible Weak Solutions}\label{ss:weak} 

For completeness of presentation we provide here the definition of weak solutions we work with.

\begin{definition}
	The trio $(\vr,\vv,p)$ is called a bounded weak solution to the full compressible Euler system \eqref{eq:euler}-\eqref{eq:initial} if $(\vr,\vv,p) \in L^\infty([0,\infty) \times \R^2; \R^+ \times \R^2 \times \R^+)$, $\vr,p \geq 0$ and the following integral equations are satisfied. 
	\begin{equation*}
	\int_0^\infty \int_{\R^2} \left(\vr \partial_t \varphi + \vr\vv\cdot\nabla_\vx \varphi\right)\dx\dt + \int_{\R^2} \vr^0\varphi(0,\cdot) \dx = 0
	\end{equation*}
	for all $\varphi \in C^\infty_c([0,\infty) \times \R^2)$,
	\begin{equation*}
	\int_0^\infty \int_{\R^2} \left(\vr\vv \cdot\partial_t \bfphi + \vr\vv\otimes\vv:\nabla_\vx \bfphi + p\Div \bfphi\right)\dx\dt + \int_{\R^2} \vr^0\vv^0\cdot\bfphi(0,\cdot) \dx = 0
	\end{equation*}
	for all $\bfphi \in C^\infty_c([0,\infty) \times \R^2;\R^2)$,
	\begin{align*}
	&\int_0^\infty \int_{\R^2} \left(\bigg(\half\vr|\vv|^2 + \vr e(\vr,p)\bigg) \partial_t \psi + \bigg(\half\vr|\vv|^2 + \vr e(\vr,p) + p\bigg)\vv\cdot\nabla_\vx \psi \right)\dx\dt \\ 
	& \qquad + \int_{\R^2} \bigg(\half\vr^0|\vv^0|^2 + \vr^0 e(\vr^0,p^0)\bigg)\psi(0,\cdot) \dx = 0 
	\end{align*}
	for all $\psi \in C^\infty_c([0,\infty) \times \R^2)$.
	
	The bounded weak solution is called admissible, if moreover
	\begin{equation*}
	\int_0^\infty \int_{\R^2} \left(\vr s(\vr,p) \partial_t \phi + \vr s(\vr,p) \vv\cdot\nabla_\vx \phi\right)\dx\dt + \int_{\R^2} \vr^0 s(\vr^0,p^0)\phi(0,\cdot) \dx \leq 0
	\end{equation*}
	for all $\phi \in C^\infty_c([0,\infty) \times \R^2)$, $\phi \geq 0$.
\end{definition}

\subsection{1D Self-Similar Solutions}\label{ss:1D}

In order to study the structure of one-dimensional self-similar solutions to the 1D Riemann problem for the Euler equations we consider the 1D version of \eqref{eq:euler} which is
\begin{equation}
	\begin{split}
		\partial_t \vr + \partial_y (\vr v) &= 0, \\
		\partial_t (\vr v) + \partial_y \left(\vr v^2 + p \right)  &= 0, \\
		\partial_t \bigg(\half\vr v^2 + \vr e(\vr,p)\bigg) + \partial_y \bigg[\bigg(\half\vr v^2 + \vr e(\vr,p) + p\bigg)v\bigg] &= 0.
	\end{split}
	\label{eq:euler1D}
\end{equation}
The unknowns in \eqref{eq:euler1D} are the density $\vr=\vr(t,y)\in\R^+$, pressure $p=p(t,y)\in\R^+$ and (scalar) velocity $v=v(t,y)\in\R$, where the independent variables are again the time $t\in[0,\infty)$ and the (scalar) position $y\in\R$.

The entropy condition is now formulated as
\begin{equation} \label{eq:entropy1D}
	\partial_t\Big(\vr s(\vr,p)\Big) + \partial_y \Big(\vr s(\vr,p) v\Big) \geq 0
\end{equation}
and we study the Riemann problem with initial data
\begin{equation}
	\begin{split}
		(\vr^0,v^0,p^0)(y):=\left\{
		\begin{array}[c]{ll}
			(\vr_-,v_-,p_-) & \text{ if }y<0 \\
			(\vr_+,v_+,p_+) & \text{ if }y>0
		\end{array}
		\right. ,
	\end{split}
\label{eq:Riemann1D}
\end{equation}
where $\vr_\pm\in\R^+$, $v_\pm\in\R$ and $p_\pm\in\R^+$ are constants. 

It is well-known, that the system \eqref{eq:euler1D} possesses three characteristic families corresponding to the following eigenvalues
\begin{equation} \label{eq:eigenvalues}
    \lambda_1 = v - \sqrt{\frac{c_v+1}{c_v}\frac{p}{\vr}}, \qquad \lambda_2 = v, \qquad \lambda_3 = v + \sqrt{\frac{c_v+1}{c_v}\frac{p}{\vr}}.
\end{equation}
The $1$- and $3$-families are genuinely non-linear and therefore produce either rarefaction waves or admissible shocks. On the other hand the $2$-family is linearly degenerate and produces contact discontinuities. It is well-known that the velocity $v$ and the pressure $p$ are constant across the contact discontinuity, whereas the discontinuity only appears in the density $\vr$.
There are 18 possible structures of 1D self-similar solutions to \eqref{eq:euler1D}, which we present in Table \ref{table}.
\renewcommand{\arraystretch}{1.3}
\begin{table}[h]
	\centering 
	\begin{tabular}{|c|c|c|c|c|c|c|c|c|} \cline{2-4} \cline{7-9}
		\multicolumn{1}{c|}{}& \centering 1-wave & \centering 2-wave & \centering 3-wave & \multicolumn{2}{c|}{} & \centering 1-wave & \centering 2-wave & \centering 3-wave \tabularnewline \cline{2-4} \cline{7-9} \multicolumn{7}{c}{}\\[-4.7mm] \cline{1-4} \cline{6-9}
		1 & \centering - & \centering - & \centering - & & 10 & \centering - & \centering contact & \centering - \tabularnewline \cline{1-4} \cline{6-9} 
		2 & \centering - & \centering - & \centering shock & & 11 & \centering - & \centering contact & \centering shock \tabularnewline \cline{1-4} \cline{6-9}
		3 & \centering - & \centering - & \centering rarefaction & & 12 & \centering - & \centering contact & \centering rarefaction \tabularnewline \cline{1-4} \cline{6-9}
		4 & \centering shock & \centering - & \centering - & & 13 & \centering shock & \centering contact & \centering - \tabularnewline \cline{1-4} \cline{6-9} 
		5 & \centering shock & \centering - & \centering shock & & 14 & \centering shock & \centering contact & \centering shock \tabularnewline \cline{1-4} \cline{6-9}
		6 & \centering shock & \centering - & \centering rarefaction & & 15 & \centering shock & \centering contact & \centering rarefaction \tabularnewline \cline{1-4} \cline{6-9}
		7 & \centering rarefaction & \centering - & \centering - & & 16 & \centering rarefaction & \centering contact & \centering - \tabularnewline \cline{1-4} \cline{6-9}
		8 & \centering rarefaction & \centering - & \centering shock & & 17 & \centering rarefaction & \centering contact & \centering shock \tabularnewline \cline{1-4} \cline{6-9}
		9 &\centering rarefaction & \centering - & \centering rarefaction & & 18 & \centering rarefaction & \centering contact & \centering rarefaction \tabularnewline \cline{1-4} \cline{6-9}
	\end{tabular}
	\caption{All the 18 possibilities of the structure of the 1D Riemann solution} \label{table}
\end{table}
\renewcommand{\arraystretch}{1}

We are interested in this paper in cases containing exactly one shock, which are cases 2, 4, 6, 8, 11, 13, 15 and 17 in Table \ref{table}.

In Proposition \ref{p:1D} we present the conditions on the Riemann initial data such that the 1D self-similar solution contains exactly one shock.

\begin{propo}\label{p:1D}
\begin{itemize}
    \item     Assume that  $p_- < p_+$ and
    \begin{equation}\label{eq:SCDRcon1} 
        -2\sqrt{c_v(c_v+1)}\sqrt{\frac{p_+}{\vr_+}}\left(1-\left(\frac{p_-}{p_+}\right)^\frac{1}{2(c_v+1)} \right) < v_- - v_+ < (p_+ - p_-)\sqrt{\frac{2c_v}{\vr_-(p_-+(2c_v+1)p_+)}}.
    \end{equation}
    Then the 1D Riemann solution to the problem \eqref{eq:euler1D}-\eqref{eq:Riemann1D} consists of a 1-shock, a possible 2-contact discontinuity and a 3-rarefaction wave.
    
    \item     Assume that  $p_- < p_+$ and
    \begin{equation}\label{eq:SCDRcon1a}
        v_- - v_+ = (p_+ - p_-)\sqrt{\frac{2c_v}{\vr_-(p_-+(2c_v+1)p_+)}}.
    \end{equation}
    Then the 1D Riemann solution to the problem \eqref{eq:euler1D}-\eqref{eq:Riemann1D} consists of a 1-shock and a possible 2-contact discontinuity.
    
    \item  Assume that  $p_- > p_+$ and
    \begin{equation}\label{eq:SCDRcon2}
        -2\sqrt{c_v(c_v+1)}\sqrt{\frac{p_-}{\vr_-}}\left(1-\left(\frac{p_+}{p_-}\right)^\frac{1}{2(c_v+1)}\right) < v_- - v_+ < (p_- - p_+)\sqrt{\frac{2c_v}{\vr_+(p_++(2c_v+1)p_-)}}.
    \end{equation}
    Then the 1D Riemann solution to the problem \eqref{eq:euler1D}-\eqref{eq:Riemann1D} consists of a 1-rarefaction wave, a possible 2-contact discontinuity and a 3-shock.
    
    \item  Assume that  $p_- > p_+$ and
    \begin{equation}\label{eq:SCDRcon2a}
        v_- - v_+ = (p_- - p_+)\sqrt{\frac{2c_v}{\vr_+(p_++(2c_v+1)p_-)}}.
    \end{equation}
    Then the 1D Riemann solution to the problem \eqref{eq:euler1D}-\eqref{eq:Riemann1D} consists of a possible 2-contact discontinuity and a 3-shock.
    \end{itemize}
\end{propo}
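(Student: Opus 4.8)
The plan is to reduce the resolution of the Riemann problem \eqref{eq:euler1D}--\eqref{eq:Riemann1D} to an analysis of wave curves in the $(v,p)$--plane. Since $v$ and $p$ are constant across the $2$--contact discontinuity (only $\vr$ may jump there), the contact is invisible in the $(v,p)$--plane, and any self--similar solution is determined by intersecting the forward $1$--wave curve issuing from the left state $(\vr_-,v_-,p_-)$ with the backward $3$--wave curve issuing from the right state $(\vr_+,v_+,p_+)$. Writing $p_m$ for the common middle pressure at the intersection, the $1$--wave is a rarefaction if $p_m<p_-$ and an admissible shock if $p_m>p_-$, while the $3$--wave is a rarefaction if $p_m<p_+$ and a shock if $p_m>p_+$. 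Consequently the solution contains \emph{exactly one} shock precisely when $p_m$ lies strictly between $p_-$ and $p_+$. In the regime $p_-<p_+$ this forces a $1$--shock together with a $3$--rarefaction (the first two bullets), and the regime $p_->p_+$ is symmetric.

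I would first record the four relevant branches explicitly. For the rarefaction branches the Riemann invariants $v\pm\frac{2c}{\gamma-1}$ of the $1$-- and $3$--families (with $c$ the sound speed from \eqref{eq:eigenvalues} and $\gamma=\frac{c_v+1}{c_v}$), together with the isentropic relation $p/\vr^\gamma=\mathrm{const}$, give
\[
v = v_- + 2\sqrt{c_v(c_v+1)}\sqrt{\tfrac{p_-}{\vr_-}}\Big(1-(p/p_-)^{\frac{1}{2(c_v+1)}}\Big)\quad(p\le p_-),
\]
\[
v = v_+ - 2\sqrt{c_v(c_v+1)}\sqrt{\tfrac{p_+}{\vr_+}}\Big(1-(p/p_+)^{\frac{1}{2(c_v+1)}}\Big)\quad(p\le p_+),
\]
for the $1$-- and $3$--rarefactions respectively. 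For the shock branches I would combine the Rankine--Hugoniot conditions with the Hugoniot density--pressure relation and fix the admissible sign by the Lax (entropy) condition, obtaining
\[
v = v_- - (p-p_-)\sqrt{\tfrac{2c_v}{\vr_-(p_-+(2c_v+1)p)}}\quad(p\ge p_-),
\]
\[
v = v_+ + (p-p_+)\sqrt{\tfrac{2c_v}{\vr_+(p_++(2c_v+1)p)}}\quad(p\ge p_+),
\]
for the $1$-- and $3$--shocks.

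Focusing on $p_-<p_+$, I would define $v_1(p_m)$ by the $1$--shock branch and $v_3(p_m)$ by the $3$--rarefaction branch and look for a root of $g:=v_1-v_3$ in $(p_-,p_+)$. The decisive structural fact is that along these branches $v_1$ is strictly decreasing and $v_3$ strictly increasing in $p_m$, so $g$ is strictly decreasing and has at most one zero. Evaluating at the endpoints, $g(p_-)>0$ (note $v_1(p_-)=v_-$) is equivalent to the left inequality in \eqref{eq:SCDRcon1}, in which the $3$--rarefaction term appears, while $g(p_+)<0$ (note $v_3(p_+)=v_+$) is equivalent to the right inequality, in which the $1$--shock term appears; hence \eqref{eq:SCDRcon1} is exactly the condition for a unique root $p_m\in(p_-,p_+)$, yielding a $1$--shock, a possible contact and a $3$--rarefaction. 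The degenerate boundary case $g(p_+)=0$, i.e.\ $p_m=p_+$, is precisely \eqref{eq:SCDRcon1a} and corresponds to the $3$--rarefaction degenerating to zero strength, leaving a $1$--shock and a possible contact. The two bullets with $p_->p_+$ then follow by applying the reflection symmetry $(y,v)\mapsto(-y,-v)$ of \eqref{eq:euler1D}--\eqref{eq:entropy1D}, which exchanges the $\pm$ states, leaves $v_--v_+$ unchanged, and turns a $1$--shock into a $3$--shock; this maps \eqref{eq:SCDRcon1}--\eqref{eq:SCDRcon1a} onto \eqref{eq:SCDRcon2}--\eqref{eq:SCDRcon2a}.

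The computation of the wave curves is routine; the step requiring genuine care is the global monotonicity of each branch (hence of $g$), which is what guarantees both that the middle state is unique and that the located intersection really falls in the mixed regime $(p_-,p_+)$ rather than in a region producing no shock or two shocks. Along the way I would also check that the constructed shock satisfies the Lax conditions and that the three wave speeds in \eqref{eq:eigenvalues} are correctly ordered so that the waves do not overlap, which confirms the asserted structure of the solution.
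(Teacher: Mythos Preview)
Your argument is correct and is precisely the standard wave--curve analysis for the full Euler Riemann problem: parametrize the forward $1$--curve and backward $3$--curve in the $(v,p)$--plane, exploit the strict monotonicity of each branch, and read off the structure of the solution from the location of the intersection pressure $p_m$ relative to $p_-$ and $p_+$. The explicit formulas you record for the rarefaction and shock branches are the right ones for the ideal gas with $e=c_v p/\vr$, and your endpoint evaluations $g(p_-)>0$, $g(p_+)<0$ indeed reproduce the two inequalities in \eqref{eq:SCDRcon1}; the degenerate case $g(p_+)=0$ gives \eqref{eq:SCDRcon1a}, and the reflection symmetry handles the remaining two bullets.

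As for comparison with the paper: the paper does not actually prove Proposition~\ref{p:1D} in--house but simply refers to \cite[Theorem~18.7]{Smoller67}. Your outline is essentially the content of that reference, so there is nothing materially different to contrast.
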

For the proof of Proposition \ref{p:1D} see \cite[Theorem 18.7]{Smoller67}.

It is not difficult to observe that the situation does not change significantly if we consider one-dimensional solutions to the two-dimensional problem \eqref{eq:euler}. Admissible solutions then satisfy
\begin{equation}
	\begin{split}
		\partial_t \vr + \partial_y (\vr v) &= 0, \\
		\partial_t (\vr u) + \partial_y (\vr uv) &=0, \\
		\partial_t (\vr v) + \partial_y (\vr v^2 + p )  &= 0, \\
		\partial_t \bigg(\half\vr(u^2 + v^2) + \vr e(\vr,p)\bigg) + \partial_y \bigg[\bigg(\half\vr (u^2 + v^2) + \vr e(\vr,p) + p\bigg)v\bigg] &= 0.
	\end{split}
	\label{eq:euler1D2D}
\end{equation} 
with
\begin{equation} \label{eq:entropy1D2D}
	\partial_t\Big(\vr s(\vr,p)\Big) + \partial_y \Big(\vr s(\vr,p) v\Big) \geq 0
\end{equation}
and we are interested in Riemann initial data
\begin{equation}
	\begin{split}
		(\vr^0,u^0,v^0,p^0)(y):=\left\{
		\begin{array}[c]{ll}
			(\vr_-,u_-,v_-,p_-) & \text{ if }y<0 \\
			(\vr_+,u_+,v_+,p_+) & \text{ if }y>0
		\end{array}
		\right. .
	\end{split}
\label{eq:Riemann1D2D}
\end{equation}

System \eqref{eq:euler1D2D} possesses also three distinct eigenvalues as in \eqref{eq:eigenvalues}, where the second eigenvalue $\lambda_2$ has however now multiplicity $2$. It is not difficult to observe, that the structure of self-similar solutions to the Riemann problem for system \eqref{eq:euler1D2D} is the same as the structure of self-similar solutions to \eqref{eq:euler1D}. The only difference is that in the case $u_- \neq u_+$ the first component of the velocity of the solution exhibits a jump from the value $u_-$ to $u_+$ on the same $2$-contact discontinuity where the density $\vr$ jumps.

In particular Proposition \ref{p:1D} holds also for the problem \eqref{eq:euler1D2D}-\eqref{eq:Riemann1D2D} regardless of values $u_{\pm}$.

\subsection{Subsolutions}\label{ss:subs} 

The main tool in our proof of Theorem \ref{t:main} is the notion of a fan subsolution, which was introduced in \cite{ChiDelKre15} and extended to the full Euler system in \cite{AKKMM}. For the sake of completeness we reformulate the definition. Furthermore we recap the relation between the existence of one fan subsolution and the existence of infinitely many admissible weak solutions to \eqref{eq:euler}-\eqref{eq:Riemann}.
\begin{definition}
Let $\mu_0 < \mu_1 < \mu_2$ be real numbers. A fan partition of $(0,\infty)\times \R^2$ consists of four sets $\Omega_-,\Omega_1,\Omega_2, \Omega_+$ of the form
\begin{align*} 
    \Omega_- &= \left\{(t,\vx) : t > 0 \text{ and } y < \mu_0 t\right\} \\
    \Omega_1 \:&= \left\{(t,\vx) : t > 0 \text{ and } \mu_0 t < y < \mu_1 t\right\} \\
    \Omega_2 \:&= \left\{(t,\vx) : t > 0 \text{ and } \mu_1 t < y < \mu_2 t\right\} \\
    \Omega_+ &= \left\{(t,\vx) : t > 0 \text{ and } \mu_2 t < y \right\}.
\end{align*}
\end{definition}

We remark that in \cite{AKKMM} this object was called a $2$-fan partition. In what follows, the symbol $\Sz$ denotes the set of symmetric traceless $2\times2$ matrices. Now we are ready to define a fan subsolution.

\begin{definition} 
	An admissible fan subsolution to the Riemann problem for the Euler system \eqref{eq:euler}-\eqref{eq:Riemann} is a quintuple $(\ov{\vr},\ov{\vv},\ov{\U},\ov{C},\ov{p}):(0,\infty)\times\R^2\rightarrow(\R^+\times\R^2\times\Sz\times\R^+\times\R^+)$ of piecewise constant functions, which satisfies the following properties:
	\begin{enumerate}
		\item There exists a fan partition  $\Omega_-,\Omega_1,\Omega_2,\Omega_+$ of $(0,\infty)\times\R^2$ and for $i\in\{1,2\}$ there exist constants $\vr_i\in\R^+$, $\vv_i\in\R^2$, $\U_i\in\Sz$, $C_i\in\R^+$ and $p_i\in\R^+$, such that
		\begin{align*}
			(\ov{\vr},\ov{\vv},\ov{\U},\ov{C},\ov{p})&=\sum\limits_{i\in\{-,+\}} \bigg(\vr_i\,,\,\vv_i\,,\,\vv_i\otimes \vv_i - \frac{|\vv_i|^2}{2}\,\id\,,\,|\vv_i|^2\,,\,p_i\bigg)\,\mathbf{1}_{\Omega_i} + \sum\limits_{i=1}^2 (\vr_i\,,\,\vv_i\,,\,\U_i\,,\,C_i\,,\,p_i)\,\mathbf{1}_{\Omega_i},
		\end{align*}
		where $\vr_{\pm},\vv_{\pm},p_\pm$ are the constants given by the initial condition \eqref{eq:Riemann}.
		
		\item For $i\in\{1,2\}$ the following inequality holds in the sense of definiteness:
		\begin{equation*}
			\vv_i\otimes \vv_i-\U_i < \frac{C_i}{2} \id.
		\end{equation*}
		\item For all test functions $(\varphi,\bfphi,\psi)\in \DC([0,\infty)\times\R^2;\R\times\R^2\times\R)$ the following identities hold:
		\begin{align*} 
			\intt{\intx{\big[\ov{\vr} \partial_t\varphi + \ov{\vr} \ov{\vv}\cdot\Grad\varphi\big]}} + \intx{ \vr^0 \varphi(0,\cdot)}\ &=\ 0, \\
			\int_0^\infty\int_{\R^2}\bigg[\ov{\vr} \ov{\vv}\cdot\partial_t\bfphi + \ov{\vr} \ov{\U}:\Grad\bfphi + \bigg(\ov{p} + \half \ov{\vr} \ov{C} \bigg) \Div\bfphi\bigg]\dx\dt  + \intx{ \vr^0 \vv^0\cdot\bfphi(0,\cdot)}\  &=\ 0, \\
			\int_0^\infty\int_{\R^2}\bigg[\bigg(\half \ov{\vr} \ov{C} + c_v \ov{p}\bigg) \partial_t\psi +\bigg(\half \ov{\vr} \ov{C} + (c_v+1) \ov{p}\bigg) \ov{\vv}\cdot\Grad\psi\bigg]\dx\dt & \\
			+ \intx{\bigg(\vr^0 \frac{|\vv^0|^2}{2} + c_v p^0\bigg) \psi(0,\cdot)} \ &=\ 0. 
		\end{align*}
		\item For every non-negative test function $\phi\in \DC([0,\infty)\times\R^2;\R_0^+)$ the inequality
		\begin{align*}
			&\intt{\intx{\Big[\ov{\vr} s(\ov{\vr},\ov{p}) \partial_t\phi + \ov{\vr} s(\ov{\vr},\ov{p}) \ov{\vv}\cdot\Grad\phi\Big]}} + \intx{ \vr^0 s(\vr^0,p^0) \phi(0,\cdot)} \leq 0 
		\end{align*}
		is fulfilled.
	\end{enumerate}
	\label{d:ss}
\end{definition}

The relation between the notion of subsolution and existence of infinitely many solutions to the Euler equations is stated in the following Proposition.

\begin{propo}
	Let $(\vr_\pm,\vv_\pm,p_\pm)$ be such that there exists an admissible fan subsolution $(\ov{\vr},\ov{\vv},\ov{\U},\ov{C},\ov{p})$ to the Cauchy problem \eqref{eq:euler}-\eqref{eq:Riemann}. Then there exist infinitely many bounded admissible weak solutions $(\vr,\vv,p)$ to \eqref{eq:euler}-\eqref{eq:Riemann} with the following properties:
	\begin{itemize}
		\item $(\vr,p)=(\ov{\vr},\ov{p})$,
		\item $\vv= \vv_\pm$ on $\Omega_\pm$, 
		\item $|\vv|^2= C_i$ a.e. on $\Omega_i$, $i=1,2$.
	\end{itemize}
	\label{p:ss}
\end{propo}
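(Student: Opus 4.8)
The plan is to keep the density and the pressure frozen at the subsolution values, $(\vr,p)=(\ov{\vr},\ov{p})$, to retain the given constant states $\vv=\vv_\pm$ on $\Omega_\pm$ (where $(\vr_\pm,\vv_\pm,p_\pm)$ already solve \eqref{eq:euler} trivially), and to produce the sought velocity fields only on the two mixing regions $\Omega_1\cup\Omega_2$ by convex integration. The heart of the reduction is that freezing $\vr$ and $p$ decouples the system. On each $\Omega_i$ the density $\vr_i$ and pressure $p_i$ are constant, so the continuity equation collapses to $\Div\vv=0$, the pressure gradient drops out of the momentum balance, and, writing $\vv\otimes\vv=\U+\half|\vv|^2\id$, the momentum equation reduces to the linear relation $\partial_t\vv+\Div\U=0$ supplemented by the nonlinear pointwise constraint $\vv\otimes\vv-\U=\frac{C_i}{2}\id$, equivalently $|\vv|^2=C_i$ together with $\U=\vv\otimes\vv-\frac{C_i}{2}\id$. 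This is exactly the constraint structure to which the convex integration Lemma \ref{l:key} applies.

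I would then apply Lemma \ref{l:key} on each region $\Omega_i$, $i\in\{1,2\}$, taking as subsolution the constant pair $(\vv_i,\U_i)$. Being constant, this pair solves the linear constraints $\Div\vv=0$ and $\partial_t\vv+\Div\U=0$ trivially, and the strict definiteness condition (2) of Definition \ref{d:ss}, namely $\vv_i\otimes\vv_i-\U_i<\frac{C_i}{2}\id$, places it in the interior of the relevant constraint set, which is precisely the hypothesis required by the lemma. Note that taking the trace gives $|\vv_i|^2<C_i$, so the construction genuinely increases the kinetic energy. The lemma then furnishes infinitely many $\vv\in L^\infty(\Omega_i)$ solving the linear constraints with $|\vv|^2=C_i$ a.e., produced by localized (compactly supported) perturbations so that the interface and initial traces are left undisturbed. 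Gluing these fields across $\Omega_1$ and $\Omega_2$ and setting $\vv=\vv_\pm$ on $\Omega_\pm$ produces a global velocity field $\vv$ with the three stated properties.

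It then remains to verify that $(\ov{\vr},\vv,\ov{p})$ is an admissible weak solution. The continuity equation holds because $\vr$ is constant on each region and $\Div\vv=0$; the momentum equation holds by construction, since $\partial_t(\vr_i\vv)+\Div\big(\vr_i\vv\otimes\vv+p_i\id\big)=\vr_i(\partial_t\vv+\Div\U)=0$ in each interior. The energy equation has the same structure: once $|\vv|^2=C_i$ and $\vr e(\vr,p)=c_v p$, the energy density equals the constant $\half\vr_iC_i+c_vp_i$ and the energy flux is the constant $\big(\half\vr_iC_i+(c_v+1)p_i\big)$ times $\vv$, so the identity reduces to $\partial_t(\text{const})+(\text{const})\Div\vv=0$, matching the subsolution identity (3) of Definition \ref{d:ss}. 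For the entropy inequality, observe that $\vr s(\vr,p)$ is constant on each $\Omega_i$, whence with $\Div\vv=0$ the entropy production vanishes in the interior and the global inequality is inherited from condition (4). Distinctness of the infinitely many solutions follows since the velocity fields supplied by Lemma \ref{l:key} are mutually distinct in $L^\infty$.

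The one genuinely delicate point is the transfer of the interface (Rankine--Hugoniot) relations from the subsolution to the solution. Inside each $\Omega_i$ the convex-integration field $\vv$ differs from the constant state $\vv_i$---indeed $|\vv|^2=C_i>|\vv_i|^2$---so one cannot argue by continuous matching at $\partial\Omega_i$. The resolution is that the perturbations in Lemma \ref{l:key} are generated by compactly supported potentials, hence contribute nothing to the weak formulations tested against smooth functions across the fan interfaces or at $t=0$; consequently the jump relations encoded in the weak identities (3) and the entropy bound (4) carry over to $\vv$ verbatim. Thus the bulk of the analytic difficulty is concentrated in Lemma \ref{l:key} itself, which I take as given; modulo that lemma, the proof is the verification above that the frozen density and pressure render every equation other than the momentum balance automatic.
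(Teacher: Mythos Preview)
Your proposal is correct and follows essentially the same approach as the paper: apply Lemma \ref{l:key} on each wedge $\Omega_i$ to the constant pair $(\vv_i,\U_i)$, freeze $(\vr,p)=(\ov\vr,\ov p)$, and verify that the resulting field is an admissible weak solution by combining the subsolution identities (3)--(4) of Definition \ref{d:ss} with the global distributional equations for the perturbations. The only minor clarification is that the reason the Rankine--Hugoniot conditions transfer is precisely property 2 of Lemma \ref{l:key}---the perturbation equations hold against test functions on all of $\R\times\R^2$, not just in $\Omega_i$---rather than any statement about compact support of the perturbations inside $\Omega_i$; the paper itself does not spell this out and refers to \cite[Theorem 4.2]{AKKMM} for the details you have supplied.
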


The proof of Proposition \ref{p:ss} is based on the $L^\infty$ theory developed by De Lellis and Sz\'ekelyhidi \cite{DLSz1}, \cite{DLSz2} which proves existence of infinitely many bounded solutions to the incompressible Euler system. This theory and propositions similar to Proposition \ref{p:ss} were used successfully to tackle problems of non-uniqueness of bounded weak solutions to the Riemann problem for isentropic compressible Euler equations, see \cite{ChiDelKre15} among others, as well as for the full Euler equations, see \cite{AKKMM}. The key lemma here is the following oscillation lemma, which provides infinitely many solutions to a linear system resembling incompressible pressureless Euler equations. 

\begin{lemma}
	Let $(\til{\vv},\til{\U})\in\R^2\times \Sz$ and $C>0$ such that  $\til{\vv}\otimes\til{\vv}-\til{\U}<\frac{C}{2} \id$. Furthermore let $\Omega\subset\R\times\R^2$ be any open set. Then there exist infinitely many maps $(\un{\vv},\un{\U})\in L^\infty(\R\times\R^2;\R^2\times\Sz)$ with the following properties.
	\begin{enumerate}
		\item $\un{\vv}$ and $\un{\U}$ vanish outside $\Omega$.
		\item The system of equations 
		\begin{align*}
			\Div  \un{\vv}\ &=\ 0,  \\
			\partial_t  \un{\vv} + \Div  \un{\U}\ &=\ 0 
		\end{align*}
		holds in the sense of distributions, i.e. for all test functions $(\varphi,\bfphi)\in \DC(\R\times\R^2;\R\times\R^2)$ it holds that 
		\begin{align*}
			\iint_\Omega\un{\vv}\cdot\Grad \varphi \dx \dt\ &=\ 0, \\
			\iint_\Omega(\un{\vv}\cdot\partial_t \bfphi + \un{\U}:\Grad \bfphi)\dx\dt\ &=\ 0. 
		\end{align*}
		\item The equation $(\til{\vv}+\un{\vv})\otimes (\til{\vv}+\un{\vv}) - (\til{\U}+\un{\U}) = \frac{C}{2} \id$ is fulfilled almost everywhere in $\Omega$. 
	\end{enumerate}
	\label{l:key}
\end{lemma}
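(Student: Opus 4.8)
The plan is to recognize this as the standard oscillation lemma of the Tartar--De~Lellis--Sz\'ekelyhidi scheme and to prove it by convex integration. First I would absorb the constant background into the unknown: setting $\vv:=\til{\vv}+\un{\vv}$ and $\U:=\til{\U}+\un{\U}$, the pair $(\un{\vv},\un{\U})$ solves the linear system in property~2 if and only if $(\vv,\U)$ does, since the constant state $(\til{\vv},\til{\U})$ lies in the kernel of the constant--coefficient, first--order operator. The pointwise target in property~3 then reads $(\vv,\U)\in K$ a.e.\ in $\Omega$, where taking the trace of $\vv\otimes\vv-\U=\frac{C}{2}\id$ and using that $\til{\U},\U\in\Sz$ are traceless gives
\begin{equation*}
 K \;=\; \Big\{(\vv,\U)\in\R^2\times\Sz \;:\; \U=\vv\otimes\vv-\tfrac{C}{2}\,\id,\ |\vv|^2=C\Big\},
\end{equation*}
a compact set (a circle's worth of admissible $\vv$, with $\U$ determined by $\vv$). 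Outside $\Omega$ I keep $\un{\vv}=\un{\U}=0$, i.e.\ $(\vv,\U)=(\til{\vv},\til{\U})$, so the entire construction must be localized in $\Omega$.

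The second ingredient is the wave cone of the linear system. Seeking plane waves $(\vv,\U)(t,\vx)=(\vc{w},\vc{M})\,h(c\,t+\xi\cdot\vx)$ with $\vc{w}\in\R^2$, $\vc{M}\in\Sz$, the two equations in property~2 force $\xi\cdot\vc{w}=0$ and $c\,\vc{w}+\vc{M}\xi=0$. In two dimensions $\xi$ is thus parallel to $\vc{w}^\perp$, and for every $\vc{w}\neq0$ and every $c$ there is a symmetric traceless $\vc{M}$ solving $\vc{M}\,\vc{w}^\perp=-c\,\vc{w}$; hence the wave cone $\Lambda$ contains oscillation directions through every state. I would use these plane waves, passed through a suitable potential so that they solve the linear system exactly, then mollified and cut off by a function supported in $\Omega$, as the building blocks, the cutoff error being made negligible by choosing high frequency.

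The geometric heart of the argument --- and the step I expect to be the main obstacle --- is the $\Lambda$-convexity analysis relating $K$, its hull, and the open set
\begin{equation*}
 U \;=\; \Big\{(\vv,\U)\in\R^2\times\Sz \;:\; \vv\otimes\vv-\U<\tfrac{C}{2}\,\id\Big\}.
\end{equation*}
The hypothesis $\til{\vv}\otimes\til{\vv}-\til{\U}<\frac{C}{2}\id$ says precisely that $(\til{\vv},\til{\U})\in U$, and taking traces shows $|\vv|^2<C$ throughout $U$. One must establish that $U$ is the interior of the $\Lambda$-convex hull of $K$, together with the key \emph{segment estimate}: through every point $(\vv,\U)\in U$ there is a direction in $\Lambda$ along which one may move while staying in $U$, by an amplitude bounded below in terms of the gap matrix $\frac{C}{2}\id-(\vv\otimes\vv-\U)$, whose trace is $C-|\vv|^2$. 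It is exactly the rigid structure $\U=\vv\otimes\vv-\frac{C}{2}\id$ on $K$ that makes this geometry close up, so I view verifying the hull identity and the quantitative segment bound as the crux.

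With these in hand I would close the argument by the Baire category method. Let $X_0$ be the set of $(\un{\vv},\un{\U})\in L^\infty$ solving the linear system, supported in $\Omega$, with $(\til{\vv}+\un{\vv},\til{\U}+\un{\U})\in U$ a.e.; this is nonempty since $(\un{\vv},\un{\U})=(0,0)$ belongs to it by the hypothesis. Let $X$ be the closure of $X_0$ in a bounded ball of $L^2(\Omega)$ with the weak topology, which is a complete metrizable space. The functional $I(\un{\vv},\un{\U})=\int_\Omega\big(C-|\til{\vv}+\un{\vv}|^2\big)\dx\dt\geq0$ vanishes exactly on the desired solutions; it is weakly upper semicontinuous, hence of Baire class one, so its continuity points on $X$ form a residual set. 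Using the localized plane waves and the segment estimate, any non-solution admits a weakly nearby element of $X$ with $I$ strictly smaller by a definite amount, so a continuity point must satisfy $I=0$, i.e.\ $|\vv|^2=C$ and thus $(\vv,\U)\in K$ a.e.\ in $\Omega$. A residual subset of a complete metric space without isolated points is uncountable, which yields infinitely many maps $(\un{\vv},\un{\U})$ enjoying all three required properties.
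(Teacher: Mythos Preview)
Your sketch is essentially the De Lellis--Sz\'ekelyhidi convex integration argument, and it is the right approach; however, the paper does not actually prove this lemma. It states the result and refers to \cite[Lemma~3.7]{ChiDelKre15} (which in turn builds on \cite{DLSz1,DLSz2}), so there is nothing to compare your proof against in this text---your outline is precisely the scheme carried out in those cited works.

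One small technical remark on your write-up: the statement allows $\Omega\subset\R\times\R^2$ to be any open set, in particular unbounded (and in the applications in this paper $\Omega$ is an infinite wedge). Your Baire functional $I(\un{\vv},\un{\U})=\int_\Omega\big(C-|\til{\vv}+\un{\vv}|^2\big)\dx\dt$ is then typically $+\infty$, and the ``bounded ball of $L^2(\Omega)$'' is not the right ambient space. The standard fix, used in the references, is to run the Baire argument in the weak-$\ast$ topology of $L^\infty$ (or weak $L^2_{\rm loc}$) and to replace $I$ by a localized version, e.g.\ $\sum_k 2^{-k}\int_{\Omega\cap B_k}\big(C-|\til{\vv}+\un{\vv}|^2\big)\dx\dt$ for an exhaustion $(B_k)$ by bounded sets; the segment estimate and the perturbation step are local anyway, so the rest of your argument goes through unchanged.
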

We are not going to prove Lemma \ref{l:key} here and refer the reader to \cite[Lemma 3.7]{ChiDelKre15}, where this version of the oscillation lemma was used first.

Proposition \ref{p:ss} is an easy consequence of Lemma \ref{l:key}. In fact, Definition \ref{d:ss} of a fan subsolution was designed in such a way that adding solutions from Lemma \ref{l:key} supported in regions $\Omega_1$ and $\Omega_2$ of the fan partition to a subsolution produces solutions to the Cauchy problem \eqref{eq:euler}-\eqref{eq:Riemann}. For more details we refer to \cite[Theorem 4.2]{AKKMM}.

Since the subsolution is a piecewise constant object supposed to satisfy a system of partial differential equations, it is easy to observe that the system of PDEs reduces to a system of Rankine-Hugoniot conditions to be satisfied on the interfaces between regions where the subsolution is constant. Thus, we arrive at the following proposition.

\begin{propo} 
	Let $\vr_\pm, p_\pm\in\R^+$, $\vv_\pm\in\R^2$ be given. The constants $\mu_0,\mu_1,\mu_2\in\R$ and $\vr_i,p_i\in\R^+$,
	\begin{align*}
		\vv_i&=\left(\begin{array}{c}
		\alpha_i \\ 
		\beta_i
		\end{array}\right)\in\R^2, & \U_i&=\left(\begin{array}{rr}
		\gamma_i & \delta_i \\
		\delta_i & -\gamma_i
		\end{array}\right)\in\Sz, 
	\end{align*}
	and $C_i\in\R^+$ (for $i=1,2$) define an admissible fan subsolution to the Cauchy problem \eqref{eq:euler}-\eqref{eq:Riemann} if and only if they fulfill the following algebraic equations and inequalities: 
	\begin{itemize}
		\item Order of the speeds:
		\begin{align}
			\mu_0&<\mu_1<\mu_2
			\label{eq:order}
		\end{align}
		\item Rankine Hugoniot conditions on the left interface:
		\begin{align}
			\mu_0 (\vr_- - \vr_1) &= \vr_- v_- - \vr_1 \beta_1 \label{eq:rhl1}\\
			\mu_0 (\vr_- u_- - \vr_1 \alpha_1) &= \vr_- u_- v_- - \vr_1 \delta_1 \label{eq:rhl2}\\
			\mu_0 (\vr_- v_- - \vr_1 \beta_1) &= \vr_- v_-^2 - \vr_1 \bigg(\frac{C_1}{2}-\gamma_1\bigg) + p_- - p_1  \label{eq:rhl3} 
		\end{align}
		\begin{equation}
		\begin{split}
			&\mu_0 \bigg(\half\vr_- |\vv_-|^2 + c_v p_- - \vr_1 \frac{C_1}{2} - c_v p_1\bigg) = \\
			&\quad\bigg(\half\vr_- |\vv_-|^2 + (c_v+1) p_-\bigg) v_- - \bigg(\vr_1 \frac{C_1}{2} + (c_v+1) p_1\bigg) \beta_1
		\end{split}
		\label{eq:rhl4}
		\end{equation}
		
		\item Rankine Hugoniot conditions on the middle interface: 
		\begin{align}
			\mu_1 (\vr_1 - \vr_2) &= \vr_1 \beta_1 - \vr_2 \beta_2 \label{eq:rhm1}\\
			\mu_1 (\vr_1 \alpha_1 - \vr_2 \alpha_2) &= \vr_1 \delta_1 - \vr_2 \delta_2 \label{eq:rhm2}\\
			\mu_1 (\vr_1 \beta_1 - \vr_2 \beta_2) &= \vr_1 \bigg(\frac{C_1}{2}-\gamma_1\bigg) - \vr_2 \bigg(\frac{C_2}{2}-\gamma_2\bigg) + p_1 - p_2  \label{eq:rhm3}
		\end{align}
		\begin{equation}
		\begin{split}
			&\mu_1 \bigg(\vr_1 \frac{C_1}{2} + c_v p_1 - \vr_2 \frac{C_2}{2} - c_v p_2\bigg) = \\
			&\quad\bigg(\vr_1 \frac{C_1}{2} + (c_v+1) p_1\bigg) \beta_1 - \bigg(\vr_2 \frac{C_2}{2} + (c_v+1) p_2\bigg) \beta_2
		\end{split}
		\label{eq:rhm4}
		\end{equation}
			
		\item Rankine Hugoniot conditions on the right interface:
		\begin{align}
			\mu_2 (\vr_2 - \vr_+) &= \vr_2 \beta_2 - \vr_+ v_+ \label{eq:rhr1}\\
			\mu_2 (\vr_2 \alpha_2 - \vr_+ u_+) &= \vr_2 \delta_2 - \vr_+ u_+ v_+ \label{eq:rhr2}\\
			\mu_2 (\vr_2 \beta_2 - \vr_+ v_+) &= \vr_2 \bigg(\frac{C_2}{2}-\gamma_2\bigg) - \vr_+ v_+^2 + p_2 - p_+ \label{eq:rhr3}
		\end{align}
		\begin{equation}
		\begin{split}
			&\mu_2 \bigg(\vr_2 \frac{C_2}{2} + c_v p_2 - \half\vr_+ |\vv_+|^2 - c_v p_+\bigg) = \\
			&\quad\bigg(\vr_2 \frac{C_2}{2} + (c_v+1) p_2\bigg) \beta_2 - \bigg(\half\vr_+ |\vv_+|^2 + (c_v+1) p_+\bigg) v_+
		\end{split}
		\label{eq:rhr4}
		\end{equation}

		\item Subsolution conditions for $i=1,2$:
		\begin{align}
			C_i - \alpha_i^2 - \beta_i^2 &> 0 \label{eq:sc1}\\
			\bigg(\frac{C_i}{2} - \alpha_i^2 + \gamma_i\bigg) \bigg(\frac{C_i}{2} - \beta_i^2 - \gamma_i\bigg) - (\delta_i-\alpha_i \beta_i)^2 &> 0 \label{eq:sc2} 
		\end{align}
		\item Admissibility condition on the left interface:
		\begin{equation}
			\mu_0 \Big(\vr_1 s(\vr_1,p_1) - \vr_- s(\vr_-,p_-)\Big)\leq \vr_1 s(\vr_1,p_1) \beta_1 - \vr_- s(\vr_-,p_-) v_-
			\label{eq:adml}
		\end{equation}
		\item Admissibility condition on the middle interface: 
		\begin{equation}
			\mu_1 \Big(\vr_2 s(\vr_2,p_2) - \vr_1 s(\vr_1,p_1)\Big) \leq \vr_2 s(\vr_2,p_2) \beta_2 - \vr_1 s(\vr_1,p_1) \beta_1
			\label{eq:admm}
		\end{equation}
		\item Admissibility condition on the right interface: 
		\begin{equation}
			\mu_2 \Big(\vr_+ s(\vr_+,p_+) - \vr_2 s(\vr_2,p_2)\Big) \leq \vr_+ s(\vr_+,p_+) v_+ - \vr_2 s(\vr_2,p_2) \beta_2.
			\label{eq:admr}
		\end{equation}
		
	\end{itemize}
	\label{prop:algequa}
\end{propo}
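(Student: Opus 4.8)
The plan is to prove the equivalence by exploiting that every component of the quintuple $(\ov{\vr},\ov{\vv},\ov{\U},\ov{C},\ov{p})$ is piecewise constant on the fan partition, so that the three weak identities and the entropy inequality of Definition \ref{d:ss} are each equivalent to a jump (Rankine--Hugoniot) relation across the three interfaces $y=\mu_i t$, together with compatibility at $t=0$. The order condition \eqref{eq:order} is nothing but the defining requirement $\mu_0<\mu_1<\mu_2$ of a fan partition and so carries over verbatim; the real content is to translate conditions 2, 3 and 4 of Definition \ref{d:ss} into the remaining algebraic relations. Since each reduction step below is reversible, both implications follow simultaneously.

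I would first treat condition 3. For a piecewise constant field the distributional derivatives of each conserved quantity vanish in the interior of the regions and concentrate as Dirac masses on the interfaces, with weight $-\mu_i[Q]+[F]$, where $[\,\cdot\,]$ denotes the jump (upper state minus lower state) across $y=\mu_i t$ and $F$ is the $y$-component of the associated flux. Reading off the three identities, the conserved quantities and their $y$-fluxes are $\ov{\vr}$ with flux $\ov{\vr}\,\ov{v}$ (mass); $\ov{\vr}\,\ov{\vv}$ with flux given by the second column of $\ov{\vr}\,\ov{\U}+(\ov{p}+\tfrac12\ov{\vr}\,\ov{C})\id$ (momentum); and $\tfrac12\ov{\vr}\,\ov{C}+c_v\ov{p}$ with flux $(\tfrac12\ov{\vr}\,\ov{C}+(c_v+1)\ov{p})\ov{v}$ (energy). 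On the outer regions $\Omega_\pm$ the prescribed form $\ov{\U}=\vv_\pm\otimes\vv_\pm-\tfrac{|\vv_\pm|^2}{2}\id$ and $\ov{C}=|\vv_\pm|^2$ makes these fluxes collapse to the genuine Euler fluxes: the off-diagonal entry becomes $u_\pm v_\pm$, the quantity $\tfrac{C}{2}-\gamma$ reduces to $v_\pm^2$, and the energy reverts to $\tfrac12\vr_\pm|\vv_\pm|^2+c_v p_\pm$. Setting the weight of each Dirac mass to zero on the left, middle and right interface then yields exactly \eqref{eq:rhl1}--\eqref{eq:rhl4}, \eqref{eq:rhm1}--\eqref{eq:rhm4} and \eqref{eq:rhr1}--\eqref{eq:rhr4}, respectively, the asymmetry of the outer conditions coming precisely from this collapse. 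The initial-data integrals impose no further condition: near $t=0$ the inner strips $\Omega_1,\Omega_2$ have zero spatial measure, so the trace at $t=0$ equals $(\vr_-,\vv_-,p_-)$ for $y<0$ and $(\vr_+,\vv_+,p_+)$ for $y>0$, which coincides with \eqref{eq:Riemann} and cancels the boundary term produced by integrating by parts in time.

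Condition 4 is handled in the same manner. Because the test function $\phi$ is nonnegative, the distributional inequality forces the Dirac weight $-\mu_i[\ov{\vr} s]+[\ov{\vr} s\,\ov{v}]$ on each interface to be nonnegative; writing $s(\vr,p)$ as in \eqref{eq:entropyex} and keeping track of which region is the upper one gives precisely \eqref{eq:adml}, \eqref{eq:admm} and \eqref{eq:admr}. Condition 2 is purely pointwise and independent of the dynamics: for $i=1,2$ the matrix $\tfrac{C_i}{2}\id-(\vv_i\otimes\vv_i-\U_i)$ is symmetric, and by the trace--determinant criterion it is positive definite if and only if its trace $C_i-\alpha_i^2-\beta_i^2$ and its determinant are both positive. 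Computing the diagonal entries $\tfrac{C_i}{2}-\alpha_i^2+\gamma_i$ and $\tfrac{C_i}{2}-\beta_i^2-\gamma_i$ and the off-diagonal entry $\delta_i-\alpha_i\beta_i$ turns these two scalar requirements into \eqref{eq:sc1} and \eqref{eq:sc2}.

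I do not anticipate a genuine obstacle, as the statement is essentially a bookkeeping translation of the weak formulation into jump relations. The only points demanding care are the orientation conventions -- ensuring the jump sign and the direction of the entropy inequality are applied consistently across all three interfaces -- and checking that the special outer-state fluxes reduce to the classical Euler fluxes, so that the left and right Rankine--Hugoniot conditions legitimately take the stated form while the middle one retains the $\gamma_i,C_i$ dependence. Everything else is routine linear algebra and distribution theory.
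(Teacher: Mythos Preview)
Your proposal is correct and follows exactly the natural route: reduce the distributional identities and inequality in Definition~\ref{d:ss} to Rankine--Hugoniot jump relations across the three interfaces (using that all quantities are piecewise constant on the fan partition), observe that the special choice of $\ov{\U}$ and $\ov{C}$ on $\Omega_\pm$ collapses the outer fluxes to the genuine Euler fluxes, and rewrite the pointwise matrix inequality in condition~2 via the trace--determinant criterion for $2\times 2$ symmetric matrices. The paper itself does not spell out a proof but simply refers to \cite[Proposition~4.4]{AKKMM}, where the argument is precisely the one you outline; so there is no alternative approach to compare against, and your sketch is essentially what that reference contains.
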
 

For the proof of Proposition \ref{prop:algequa} we refer to \cite[Proposition 4.4]{AKKMM}.

\subsection{Invariance of the Euler Equations}\label{ss:inv} 

Here we present two well-known principles of invariance for the Euler equations. The proofs of both Propositions \ref{p:Galileo} and \ref{p:Galileo2} can be carried out by a direct computation using the Euler equations \eqref{eq:euler} and the entropy condition \eqref{eq:entropy} and are left to the kind reader.

\begin{propo}\label{p:Galileo} 
Let $(\vr,\vv,p)(t,\vx)$ be an admissible weak solution to the Euler equations \eqref{eq:euler}-\eqref{eq:entropyex} with Riemann initial data
\begin{equation*}
	\begin{split}
		(\vr^0,\vv^0,p^0)(\vx)=\left\{
		\begin{array}[c]{ll}
			(\vr_-,\vv_- - \va,p_-) & \text{ if }y<0 \\
			(\vr_+,\vv_+ - \va,p_+) & \text{ if }y>0 
		\end{array}
		\right. ,
	\end{split}
\end{equation*}
where $\va\in\R^2$. Then 
\begin{equation*}
    (\vr^\new, \vv^\new, p^\new)(t,\vx) := (\vr,\vv + \va,p)(t,\vx-\va t)
\end{equation*}
is an admissible weak solution to the Euler equations \eqref{eq:euler}-\eqref{eq:entropyex} with Riemann initial data
\begin{equation*}
	\begin{split}
		(\vr^{\new,0},\vv^{\new,0},p^{\new,0})(\vx)=\left\{
		\begin{array}[c]{ll}
			(\vr_-,\vv_-,p_-) & \text{ if }y<0 \\
			(\vr_+,\vv_+,p_+) & \text{ if }y>0
		\end{array}
		\right. .
	\end{split}
\end{equation*}
\end{propo}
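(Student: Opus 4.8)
The plan is to verify directly that $(\vr^\new,\vv^\new,p^\new)$ satisfies each integral identity in the definition of an admissible weak solution, reducing them to the corresponding identities for $(\vr,\vv,p)$ by the change of variables $\vc{y}=\vx-\va t$ (which at fixed $t$ has Jacobian one). The crucial device is a correspondence between test functions: given a test function $\varphi^\new$ for the new problem, set $\varphi(t,\vc{y}):=\varphi^\new(t,\vc{y}+\va t)$. Then $\varphi$ is again smooth and compactly supported, it satisfies $\varphi(0,\cdot)=\varphi^\new(0,\cdot)$, and the chain rule yields $\partial_t\varphi^\new=\partial_t\varphi-\va\cdot\nabla\varphi$ and $\nabla\varphi^\new=\nabla\varphi$ at corresponding points. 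I would use the same correspondence for the vector- and scalar-valued test functions $\bfphi,\psi,\phi$.

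First I would treat the continuity equation. Substituting $\vc{y}=\vx-\va t$ turns $\int_0^\infty\!\int_{\R^2}\vr^\new\partial_t\varphi^\new\dx\dt$ into an integral of $\vr\,(\partial_t\varphi-\va\cdot\nabla\varphi)$, while the flux term produces $\vr(\vv+\va)\cdot\nabla\varphi$; the two $\va$-contributions cancel, and since the initial density is unchanged the identity collapses exactly to the continuity equation for $(\vr,\vv,p)$, which holds by assumption. The entropy inequality is handled identically, because the specific entropy $s(\vr,p)$ in \eqref{eq:entropyex} does not depend on the velocity, so $\vr^\new s(\vr^\new,p^\new)=\vr s(\vr,p)$; here one only records that $\phi^\new\ge0$ if and only if $\phi\ge0$, so the direction of the inequality is preserved.

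The momentum equation is where the mechanism becomes visible. After the substitution, expanding $\vv^\new\otimes\vv^\new=(\vv+\va)\otimes(\vv+\va)$ and $\partial_t\bfphi^\new=\partial_t\bfphi-(\va\cdot\nabla)\bfphi$, the term quadratic in $\va$ and one of the two mixed terms cancel pointwise against the corresponding pieces of $\partial_t\bfphi^\new$, leaving the original momentum integrand together with the residual $\int_0^\infty\!\int_{\R^2}\vr\,\va\cdot\big(\partial_t\bfphi+(\vv\cdot\nabla)\bfphi\big)\dx\dt$. Applying the continuity equation componentwise to each component of $\bfphi$ rewrites this residual as $-\int_{\R^2}\vr^0\,\va\cdot\bfphi(0,\cdot)\dx$, which is precisely the extra contribution coming from $\vv^{\new,0}=\vv^0+\va$ in the new initial momentum $\vr^0\vv^{\new,0}$. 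Thus the momentum identity reduces to that for $(\vr,\vv,p)$.

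Finally, the energy equation follows by the same strategy, and I expect it to be the main obstacle, since it is the only place where one must invoke both the continuity and the momentum equations to absorb the residual terms. Writing the new energy density as $\half\vr^\new|\vv^\new|^2+c_v p^\new=\big(\half\vr|\vv|^2+c_v p\big)+\vr\,(\vv\cdot\va)+\half\vr|\va|^2$ and proceeding as above, the residual splits into a part linear in $\va$, which recombines into the momentum equation tested against $\va\,\psi$, and a part carrying the factor $\half|\va|^2$, which recombines into the continuity equation tested against $\half|\va|^2\psi$; together these account exactly for the kinetic-energy correction in the new initial datum $\half\vr^0|\vv^0+\va|^2$. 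Collecting everything, each identity for $(\vr^\new,\vv^\new,p^\new)$ is equivalent to the corresponding one for $(\vr,\vv,p)$, which completes the verification.
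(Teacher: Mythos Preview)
Your proposal is correct and is precisely the ``direct computation using the Euler equations \eqref{eq:euler} and the entropy condition \eqref{eq:entropy}'' that the paper indicates but leaves to the reader. In particular, your use of the continuity equation to absorb the $\va$-residual in the momentum identity, and of both the continuity and momentum equations for the energy identity, is exactly the mechanism that makes the Galilean shift work at the weak level.
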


\begin{propo}\label{p:Galileo2}
Let $(\vr,\vv,p)(t,\vx)$ be an admissible weak solution to the Euler equations \eqref{eq:euler}-\eqref{eq:entropyex} with Riemann initial data \eqref{eq:Riemann}. Then
\begin{equation*}
    (\vr^\new, \vv^\new, p^\new)(t,\vx) := (\vr,-\vv,p)(t,-\vx)
\end{equation*}
is an admissible weak solution to the Euler equations \eqref{eq:euler}-\eqref{eq:entropyex} with Riemann initial data
\begin{equation*}
	\begin{split}
		(\vr^{\new,0},\vv^{\new,0},p^{\new,0})(\vx)=\left\{
		\begin{array}[c]{ll}
			(\vr_+,-\vv_+,p_+) & \text{ if }y<0 \\
			(\vr_-,-\vv_-,p_-) & \text{ if }y>0
		\end{array}
		\right. .
	\end{split}
\end{equation*}
\end{propo}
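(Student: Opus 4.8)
The plan is to verify the four weak relations (continuity, momentum, energy, and the entropy inequality) for the candidate $(\vr^\new,\vv^\new,p^\new)(t,\vx)=(\vr,-\vv,p)(t,-\vx)$ directly from the definition of an admissible weak solution, by performing the change of variables $\vx\mapsto-\vx$ in each integral. The first thing to record is how the various nonlinearities transform under the simultaneous flip $\vx\mapsto-\vx$, $\vv\mapsto-\vv$: since $\vr$, $p$ and hence $e(\vr,p)$, $s(\vr,p)$ and $|\vv|^2$ are unaffected by the sign flip of $\vv$, the scalar densities $\vr$, $p$, $\half\vr|\vv|^2+\vr e(\vr,p)$ and $\vr s(\vr,p)$ satisfy $(\cdot)^\new(t,\vx)=(\cdot)(t,-\vx)$, whereas the momentum $\vr^\new\vv^\new(t,\vx)=-(\vr\vv)(t,-\vx)$ and every flux carrying a single factor of $\vv$ acquire one minus sign. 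Crucially the momentum flux tensor is even, $\vr^\new\vv^\new\otimes\vv^\new(t,\vx)=(\vr\vv\otimes\vv)(t,-\vx)$, because the two sign flips cancel.

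The key step is then the substitution together with a reflected test function. For the scalar test functions $\varphi,\psi,\phi$ I would set $\til\varphi(t,\vx):=\varphi(t,-\vx)$, and for the vector test function $\bfphi$ I would set $\til\bfphi(t,\vx):=-\bfphi(t,-\vx)$, with the extra minus sign. Under $\vx\mapsto-\vx$ one then has $\Grad\varphi(t,-\vx)=-\Grad\til\varphi(t,\vx)$, while the compensating sign in $\til\bfphi$ gives $\Grad\til\bfphi(t,\vx)=(\Grad\bfphi)(t,-\vx)$ and $\Div\til\bfphi(t,\vx)=(\Div\bfphi)(t,-\vx)$; the time derivatives are unchanged. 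Feeding these into, say, the continuity identity for $(\vr^\new,\vv^\new,p^\new)$ and changing variables turns it term by term into the continuity identity for $(\vr,\vv,p)$ tested against $\til\varphi$, which vanishes by hypothesis. The momentum, energy and entropy relations reduce identically; the one point needing attention is that the single sign acquired by each $\vv$-flux is exactly cancelled by the sign in $\Grad\varphi(t,-\vx)=-\Grad\til\varphi$, while the pressure term $p\,\Div\bfphi$ matches precisely because $\til\bfphi$ carries the extra minus sign. Since $\varphi\mapsto\til\varphi$ and $\bfphi\mapsto\til\bfphi$ are bijections of the respective test-function spaces, the identities for the new solution hold for all admissible test functions.

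Two small checks finish the argument. First, the reflection $\vx\mapsto-\vx$ of $\R^2$ preserves Lebesgue measure, as $\det(-I_2)=(-1)^2=1$, so the change of variables introduces no spurious factor, and because $\til\phi\ge0$ whenever $\phi\ge0$ the direction of the entropy inequality is preserved; hence $(\vr^\new,\vv^\new,p^\new)$ remains admissible. Second, the transformation of the data must be matched: since $\vx=(x,y)\mapsto-\vx=(-x,-y)$ sends $\{y<0\}$ to $\{y>0\}$ and vice versa, the reflected datum is $(\vr^0,-\vv^0,p^0)(-\vx)$, which equals $(\vr_+,-\vv_+,p_+)$ for $y<0$ and $(\vr_-,-\vv_-,p_-)$ for $y>0$, exactly as claimed.

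I expect the only genuine difficulty to be the bookkeeping of signs, in particular remembering that $\vv\otimes\vv$ is even while $\vv$ is odd, and choosing $\til\bfphi=-\bfphi(\cdot,-\cdot)$ rather than $\bfphi(\cdot,-\cdot)$ so that the momentum identity closes. An equivalent and perhaps more transparent route, valid for classical solutions and then promoted to the weak level by the very same change of variables, is to observe that the map $(t,\vx,\vr,\vv,p)\mapsto(t,-\vx,\vr,-\vv,p)$ leaves each equation in \eqref{eq:euler} and the entropy inequality \eqref{eq:entropy} formally invariant, which reduces the whole matter to a one-line symmetry check.
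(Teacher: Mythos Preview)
Your proposal is correct and is precisely the direct computation the paper has in mind: the authors do not spell out a proof but state that it ``can be carried out by a direct computation using the Euler equations \eqref{eq:euler} and the entropy condition \eqref{eq:entropy} and [is] left to the kind reader.'' Your bookkeeping of the signs (in particular the choice $\til\bfphi(t,\vx)=-\bfphi(t,-\vx)$ so that the even tensor $\vv\otimes\vv$ and the odd momentum both close correctly) is exactly what this direct verification requires.
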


With these two propositions at hand we can assume without loss of generality, that $p_- < p_+$ (Proposition \ref{p:Galileo2}) and that $\vv_+ = 0$ (Proposition \ref{p:Galileo}). We will come back to this fact in Section \ref{s:proof_main}.

\section{Smallness Result}\label{s:small}

In this section we prove the following smallness result.
\begin{theorem}\label{t:small}
	Let $c_v > \frac 12$, $\vr_\pm > 0$, $0< p_- < p_+$, $u_- \in \R$ and $u_+ = v_+ = 0$. There exists $V(p_-,p_+) < (p_+-p_-)^2\frac{2c_v}{(2c_v+1)p_+ + p_-}$ such that for any $v_- > 0$ satisfying
	\begin{equation}\label{eq:smallness_cond}
	V(p_-,p_+) < \vr_-v_-^2 < (p_+-p_-)^2\frac{2c_v}{(2c_v+1)p_+ + p_-}
	\end{equation} 
	there exist infinitely many admissible weak solutions to the Riemann problem \eqref{eq:euler}-\eqref{eq:Riemann}.
\end{theorem}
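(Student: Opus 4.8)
The plan is to reduce the statement to the construction of a single admissible fan subsolution and then to build one by hand. By Proposition \ref{p:ss} it suffices to produce one admissible fan subsolution for the data $(\vr_\pm,\vv_\pm,p_\pm)$, and by Proposition \ref{prop:algequa} this is the purely algebraic task of exhibiting speeds $\mu_0<\mu_1<\mu_2$ together with constants $\vr_i,p_i,\alpha_i,\beta_i,\gamma_i,\delta_i,C_i$ ($i=1,2$) solving \eqref{eq:order}--\eqref{eq:admr}. Since the hypotheses already fix the normalization $u_+=v_+=0$, all remaining freedom can be spent on the two mixing regions. Following the ansatz announced in the introduction, I place the interface mimicking the contact at the \emph{right} interface $\mu_2$ and impose there $\mu_2=0$ and $\beta_2=0$, i.e. a \emph{stationary} interface with zero normal velocity on both sides; crucially, I do \emph{not} require $p_2=p_+$. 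With $\mu_2=0$, $\beta_2=0=v_+$, equations \eqref{eq:rhr1}, \eqref{eq:rhr2}, \eqref{eq:rhr4} force only $\delta_2=0$ (and permit jumps of $\vr$ and of the tangential velocity $\alpha$), while \eqref{eq:rhr3} becomes $\vr_2\big(\tfrac{C_2}{2}-\gamma_2\big)=p_+-p_2$, so that a genuine pressure jump across $\mu_2$ is admissible and is balanced entirely by the Reynolds stress of the oscillation in $\Omega_2$. The entropy inequality \eqref{eq:admr} then holds automatically as the equality $0\le 0$.

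Next I would solve the Rankine--Hugoniot relations to cut down the unknowns. The mass and tangential-momentum balances \eqref{eq:rhl1}--\eqref{eq:rhl2} and \eqref{eq:rhm1}--\eqref{eq:rhm2} determine $\mu_0$, $\delta_1$, $\alpha_1$, $\alpha_2$, while \eqref{eq:rhl3}, \eqref{eq:rhl4}, \eqref{eq:rhm3}, \eqref{eq:rhm4} couple the remaining $\vr_i,\beta_i,\gamma_i,p_i,C_i$ across the left and middle interfaces. The natural organizing parameter is the deficit $\ep:=p_+-p_2\ge 0$ of the pressure in $\Omega_2$ below $p_+$, which by Proposition \ref{p:1D} is tied to the gap between $\vr_-v_-^2$ and its upper bound: at the endpoint \eqref{eq:SCDRcon1a} the intermediate pressure equals $p_+$ and the configuration degenerates to the exact one-dimensional $1$-shock-plus-contact solution. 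I would express every unknown as a function of $\ep$ and of the two free amplitudes $C_1,C_2$ (the ``size'' of the mixing), expanding around the explicit $\ep=0$ configuration.

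The heart of the proof is the two subsolution inequalities \eqref{eq:sc1} and \eqref{eq:sc2} in both regions. Inequality \eqref{eq:sc1} merely asks $C_i>\alpha_i^2+\beta_i^2$ and is secured by taking $C_i$ slightly above the square of the averaged velocity. For \eqref{eq:sc2} in $\Omega_2$ the pressure jump is exactly what saves the day: substituting $\delta_2=0$, $\beta_2=0$ and $\tfrac{C_2}{2}-\gamma_2=(p_+-p_2)/\vr_2$ collapses the left-hand side of \eqref{eq:sc2} to $\big(C_2-\alpha_2^2-\tfrac{p_+-p_2}{\vr_2}\big)\tfrac{p_+-p_2}{\vr_2}$, which is strictly positive for $p_2<p_+$ once $C_2$ is chosen large enough; had I insisted on an exact contact ($p_2=p_+$) this quantity would vanish identically, which is precisely why the contact must be the \emph{right} interface carrying a pressure jump. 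The genuine obstacle is then \eqref{eq:sc2} in the shock region $\Omega_1$: after eliminating all the Rankine--Hugoniot variables it reduces to a single scalar inequality in $\ep$ and the chosen amplitudes, whose leading behaviour as $\ep\to 0^+$ I expect to be of the form $c(c_v)\,\ep+o(\ep)$ with the sign of the coefficient $c(c_v)$ governed by $c_v-\tfrac12$. This is the point flagged in the remark after Theorem \ref{t:main} and in Section \ref{subsec:subsol_cond_2}: the standing assumption $c_v>\tfrac12$ makes the coefficient positive, so the inequality closes for all $\vr_-v_-^2$ sufficiently close to the upper endpoint. The threshold below which this fails, translated back through Proposition \ref{p:1D}, is exactly the quantity $V(p_-,p_+)$ of \eqref{eq:smallness_cond}, automatically strictly below the upper bound.

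It remains to check the ordering \eqref{eq:order} and the entropy inequalities \eqref{eq:adml}--\eqref{eq:admm}; \eqref{eq:admr} is already free. At $\ep=0$ the configuration is the genuine admissible one-dimensional solution, so \eqref{eq:adml} holds with the strict surplus of an admissible $1$-shock and \eqref{eq:admm} is (nearly) an equality across the fictitious middle interface; both survive a small perturbation in $\ep$ by continuity, the only delicate bookkeeping being to spend the strict entropy production of the $1$-shock to absorb the $O(\ep)$ errors. The ordering \eqref{eq:order} is clear, since $\mu_0<\mu_1<0=\mu_2$ for small $\ep$. Collecting these facts yields, for every $\vr_-v_-^2$ in the window \eqref{eq:smallness_cond}, one admissible fan subsolution, whence Proposition \ref{p:ss} produces infinitely many admissible weak solutions and establishes Theorem \ref{t:small}. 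I expect the single hardest step to be the sign analysis of \eqref{eq:sc2} in the shock region, both because the construction is genuinely constrained there and because it is the unique place where the hypothesis $c_v>\tfrac12$ is indispensable.
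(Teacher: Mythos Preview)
Your ansatz matches the paper's: place the contact-like interface on the right with $\mu_2=\beta_2=0$ and allow $p_2<p_+$, so that \eqref{eq:rhr3} balances the pressure jump by the Reynolds stress and \eqref{eq:admr} is trivial. You also correctly locate the role of $c_v>\tfrac12$ in a sign condition on the remaining subsolution inequality. Two steps, however, do not work as written.

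First, $C_1,C_2$ are not free parameters you can take ``large enough''. With the substitution $\ep_i:=\tfrac{C_i}{2}-\gamma_i-\beta_i^2$, $\ept_i:=\tfrac{C_i}{2}+\gamma_i-\alpha_i^2$ (and $\delta_i=\alpha_i\beta_i$), the subsolution conditions become simply $\ep_i>0$ and $\ept_i>0$, and the quantity you compute for \eqref{eq:sc2} in $\Omega_2$ is exactly $\ept_2\,\ep_2$. But $\ept_1,\ept_2$ are determined by the energy Rankine--Hugoniot relations \eqref{eq:rhl4}, \eqref{eq:rhm4}; they are outputs of the system, not knobs. In fact the paper shows that in the relevant limit $\ept_1$ and $\ept_2$ coincide and equal a single expression $\Et$ carrying the factor $(2c_v-1)$, so the delicate sign analysis you reserve for $\Omega_1$ is equally required in $\Omega_2$.

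Second, your admissibility argument does not close. The inequalities \eqref{eq:adml}, \eqref{eq:admm} must hold at each interface separately; strict entropy production at the $1$-shock cannot be ``spent'' to rescue the middle interface, which at your baseline satisfies only an equality whose perturbation you do not control. The paper's device is to choose the intermediate pressures on the isentrope through the left state, $p_i=p_-(\vr_i/\vr_-)^{(c_v+1)/c_v}$; then $s(\vr_1,p_1)=s(\vr_2,p_2)=s(\vr_-,p_-)$ and the mass balances turn \eqref{eq:adml}, \eqref{eq:admm} into identities for \emph{all} parameter values, not only in a limit. This is the missing idea. It also changes the baseline: the paper does not perturb from the one-dimensional self-similar solution, but from the degenerate configuration $\vr_1=\vr_2=\vr_K$ (with $\vr_K$ explicit, chosen so that $\beta\to 0$), where the isentropic $p_K$ lies strictly below $p_+$ and thus $\ep_2>0$ holds outright; positivity of $\ept_1=\ept_2=\Et$ then follows by a further limit $\vr_-v_-^2\to(p_+-p_-)^2\tfrac{2c_v}{(2c_v+1)p_++p_-}$ and continuity.
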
 

In the rest of this section we prove Theorem \ref{t:small}. According to Proposition \ref{p:ss} it suffices to show existence of an admissible fan subsolution in order to ensure that there are infinitely many admissible weak solutions to \eqref{eq:euler}-\eqref{eq:Riemann}. To this end we need to find $\mu_0,\mu_1,\mu_2\in\R$, $\vr_i,p_i\in\R^+$, $\vv_i\in\R^2$, $\U_i\in\Sz$ and $C_i\in\R^+$ ($i=1,2$) such that \eqref{eq:order}-\eqref{eq:admr} hold. These values then define an admissible fan subsolution according to Proposition \ref{prop:algequa}.

\subsection{Ansatz and Simplification}\label{ss:ansatz}

Due to the fact, that $\vv_+=0$, we make the following ansatz:
\begin{align}
\mu_2 &=  \beta_2 = v_{+} = 0  \label{eq:r.cd}\\
\alpha_1 &= \alpha_2 = u_{-}  \label{eq:r.cd2}\\
\delta_1 &= \alpha_1\beta_1 \\
\delta_2 &= 0.
\end{align}

\begin{remark}
	Note that the condition \eqref{eq:r.cd} in particular suggests that the right interface plays the role of the contact discontinuity, as the speed of the discontinuity $\mu_2$ equals the second components of the velocities of the subsolution on the left ($\beta_2$) and on the right ($v_{+}$) side of the interface. Therefore it is also natural to make the ansatz in such a way that the first component of the velocity exhibits its jump on this interface, as it is set in \eqref{eq:r.cd2}.
\end{remark}

We introduce the quantities $\ep_i$ and $\ept_i$ ($i=1,2$) as
\begin{align*}
\ep_i &:= \frac{C_i}{2} - \gamma_i - \beta_i^2 \\
\ept_i &:= C_i - \alpha_i^2 - \beta_i^2 - \ep_i 
\end{align*}
in order to simplify the subsolution inequalities (we eliminate $C_i$ and $\gamma_i$) and we denote $\beta := \beta_1$.

Then the set of algebraic equations and inequalities \eqref{eq:order}-\eqref{eq:admr} simplifies into
\begin{itemize}
	\item Order of the speeds:
	\begin{align}
		\mu_0&<\mu_1 < 0
	\label{eq:orders}
	\end{align}
	\item Rankine Hugoniot conditions on the left interface:
	\begin{align}
		\mu_0 (\vr_- - \vr_1) &= \vr_- v_{-} - \vr_1 \beta \label{eq:rhl1s}\\
		\mu_0 (\vr_- v_{-} - \vr_1 \beta) &= \vr_- v_{-}^2 - \vr_1 (\beta^2 + \ep_1) + p_- - p_1  \label{eq:rhl3s} 
	\end{align}
	\begin{equation}
	\begin{split}
		&\mu_0 \bigg(\half\vr_- v_{-}^2 + c_v p_- - \half\vr_1 (\beta^2+\ep_1+\ept_1) - c_v p_1\bigg) = \\
		&\quad\bigg(\half\vr_- v_{-}^2 + (c_v+1) p_-\bigg) v_{-} - \bigg(\half\vr_1 (\beta^2+\ep_1+\ept_1) + (c_v+1) p_1\bigg) \beta
	\end{split}
	\label{eq:rhl4s}
	\end{equation}
	
	\item Rankine Hugoniot conditions on the middle interface: 
	\begin{align}
		\mu_1 (\vr_1 - \vr_2) &= \vr_1 \beta \label{eq:rhm1s}\\
		\mu_1 \vr_1 \beta &= \vr_1 (\beta^2 + \ep_1) - \vr_2 \ep_2 + p_1 - p_2  \label{eq:rhm3s}
	\end{align}
	\begin{equation}
	\begin{split}
		&\mu_1 \bigg(\half\vr_1 (\beta^2+\ep_1+\ept_1) + c_v p_1 - \half\vr_2 (\ep_2+\ept_2) - c_v p_2\bigg) = \\
		&\quad\bigg(\half\vr_1 (\beta^2+\ep_1+\ept_1) + (c_v+1) p_1\bigg) \beta 
	\end{split}
	\label{eq:rhm4s}
	\end{equation}
	
	\item Rankine Hugoniot condition on the right interface:
	\begin{align}
		0 &= \vr_2\ep_2 + p_2 - p_+ \label{eq:rhr3s}
	\end{align}
	
	\item Subsolution conditions for $i=1,2$:
	\begin{align}
		\ep_i &> 0 \label{eq:sc1s}\\
		\ept_i &> 0 \label{eq:sc2s} 
	\end{align}
	\item Admissibility condition on the left interface:
	\begin{equation}
		\mu_0 \Big(\vr_1 s(\vr_1,p_1) - \vr_- s(\vr_-,p_-)\Big)\leq \vr_1 s(\vr_1,p_1) \beta - \vr_- s(\vr_-,p_-) v_{-}
	\label{eq:admls}
	\end{equation}
	\item Admissibility condition on the middle interface: 
	\begin{equation}
		\mu_1 \Big(\vr_2 s(\vr_2,p_2) - \vr_1 s(\vr_1,p_1)\Big) \leq - \vr_1 s(\vr_1,p_1) \beta
	\label{eq:admms}
	\end{equation}
	
\end{itemize}

The admissibility condition on the right interface is satisfied trivially as an equality.

We treat $\vr_{\pm},p_{\pm}$ and $v_{-}$ as given data and we want to find values for the 11 unknowns $\mu_0,\mu_1,\beta,p_{1,2},\vr_{1,2},\ep_{1,2}$ and $\ept_{1,2}$, such that \eqref{eq:orders}-\eqref{eq:admms} hold. 

To this end we proceed as follows:
\begin{enumerate} 
	\item First we treat $\vr_{1,2}$ and $p_{1,2}$ as parameters and determine that $\vr_- < \vr_1<\vr_K<\vr_2$ with some $\vr_K$ which is specified below.
	\item We express the other 7 remaining unknowns $\mu_0,\mu_1,\beta,\ep_{1,2}$ and $\ept_{1,2}$ as functions of $\vr_{1,2}$ and $p_{1,2}$. Hereby we simply state them. After showing that they are well-defined in Section \ref{subsec:well-def}, we prove in Section \ref{subsec:equations} that this gives in fact a solution to the equations \eqref{eq:rhl1s}-\eqref{eq:rhr3s}. 
	\item In Section \ref{subsec:order_speeds} we show that inequality \eqref{eq:orders} holds for all $\vr_{1,2}$ (where $\vr_- < \vr_1<\vr_K<\vr_2$) and $p_{1,2}$.
	\item In Section \ref{subsec:adm_cond} we choose the pressures $p_{1,2}$ as functions of $\vr_{1,2}$ and prove that with this choice the admissibility conditions \eqref{eq:admls}, \eqref{eq:admms} are fulfilled
	\item In Section \ref{subsec:subsol_cond_1} we show that the subsolution condition \eqref{eq:sc1s} is satisfied for all $\vr_{1,2}$ with $\vr_- < \vr_1 < \vr_K < \vr_2$ and $\vr_K - \vr_1$ and $\vr_2 - \vr_K$ small enough.		
	\item Finally in Section \ref{subsec:subsol_cond_2} we prove that if $\vr_-v_{-}^2 < (p_+-p_-)^2\frac{2c_v}{(2c_v+1)p_+ + p_-}$ where $(p_+-p_-)^2\frac{2c_v}{(2c_v+1)p_+ + p_-} - \vr_-v_{-}^2$ is sufficiently small, for all $\vr_{1,2}$ with $\vr_-<\vr_1<\vr_K<\vr_2$ and $\vr_K-\vr_1$ and $\vr_2-\vr_K$ sufficiently small, the subsolution condition \eqref{eq:sc2s} is fulfilled. 
\end{enumerate}

\subsection{A Particular Fan Subsolution}

Define 
\begin{equation} \label{eq:rhoK}
	\vr_K:=\vr_-\frac{p_+-p_-}{p_+-p_--\vr_-v_{-}^2}
\end{equation}
and note, that the right inequality in \eqref{eq:smallness_cond} implies
\begin{equation*}
\vr_-v_{-}^2 < (p_+-p_-)\frac{2c_vp_+ - 2c_vp_-}{2c_vp_+ + p_+ + p_-} < p_+ - p_-
\end{equation*}
and therefore $\vr_K > \vr_-$. 

For $\vr_{1,2}$ with $\vr_- < \vr_1<\vr_K<\vr_2$ we define
\begin{equation}\label{eq:betasol}
	\beta := \frac{\vr_-(\vr_2-\vr_1)}{\vr_1(\vr_2-\vr_-)}v_{-} - \frac{\sqrt{(\vr_2-\vr_1)(\vr_1-\vr_-)\big[(\vr_2-\vr_-)(p_+-p_-)-\vr_-\vr_2v_{-}^2\big]}}{\vr_1(\vr_2-\vr_-)},
\end{equation}

\begin{align}
	\mu_0 &:= \frac{\vr_1\beta - \vr_-v_{-}}{\vr_1 - \vr_-}, \label{eq:mu0sol} \\
	\mu_1 &:= -\frac{\vr_1\beta}{\vr_2-\vr_1}, \label{eq:mu1sol}
\end{align}

\begin{align}
	\ep_1 &:= \frac{1}{\vr_1} \left(p_- - p_1 + \frac{\vr_1 \vr_-}{\vr_1 - \vr_-}\big(v_- - \beta\big)^2 \right), \label{eq:ep1sol}\\ 
	\ep_2 &:= \frac{1}{\vr_2} \big(p_+ - p_2\big), \label{eq:ep2sol} 
\end{align}

\begin{align}
	\ept_1 &:= v_{-}^2 - \beta^2 -2c_v\left(\frac{p_1}{\vr_1}-\frac{p_-}{\vr_-}\right) - 2\frac{(\vr_1-\vr_-)(p_1\beta-p_-v_{-})}{\vr_-\vr_1(v_{-}-\beta)} + \frac{p_1-p_-}{\vr_1} - \frac{\vr_-(v_{-}-\beta)^2}{\vr_1-\vr_-}, \label{eq:ept1sol} \\
	\ept_2 &:= v_-^2 - 2c_v \left(\frac{p_2}{\vr_2} - \frac{p_-}{\vr_-}\right) - 2 \frac{(\vr_1 - \vr_-)(p_1\beta - p_- v_-)}{\vr_- \vr_1 (v_- - \beta)} + \frac{p_2 - p_+}{\vr_2} + 2\frac{\vr_2 - \vr_1}{\vr_1\vr_2}p_1. \label{eq:ept2sol}
\end{align}

\subsection{Well-Definition} \label{subsec:well-def}

Let us first check if $\beta,\mu_0,\mu_1,\ep_{1,2}$ and $\ept_{1,2}$ given by \eqref{eq:betasol}-\eqref{eq:ept2sol} are well-defined. Since $\vr_-<\vr_1<\vr_K<\vr_2$, it suffices to show that the term under the square root in \eqref{eq:betasol} is not negative and furthermore that $v_- -\beta\neq 0$.

Note that $\vr_2>\vr_K$ implies
$$
(\vr_2-\vr_1)(\vr_1-\vr_-)\big[(\vr_2-\vr_-)(p_+-p_-)-\vr_-\vr_2v_{-}^2\big]>0,
$$
whence $\beta$ is well-defined.

From $\vr_- < \vr_1 < \vr_2$ one simply derives that $\frac{\vr_-(\vr_2-\vr_1)}{\vr_1(\vr_2-\vr_-)}<1$. Hence we have 
\begin{equation} \label{eq:betavm}\begin{split}
\beta &= \frac{\vr_-(\vr_2-\vr_1)}{\vr_1(\vr_2-\vr_-)}v_{-} - \frac{\sqrt{(\vr_2-\vr_1)(\vr_1-\vr_-)\big[(\vr_2-\vr_-)(p_+-p_-)-\vr_-\vr_2v_{-}^2\big]}}{\vr_1(\vr_2-\vr_-)} \\
&< \frac{\vr_-(\vr_2-\vr_1)}{\vr_1(\vr_2-\vr_-)} v_- \quad<\quad v_-,
\end{split}
\end{equation}
which proves that $v_- -\beta\neq 0$ and therefore $\ept_{1,2}$ are well-defined.

\subsection{Equations} \label{subsec:equations}

We want to show that $\beta,\mu_0,\mu_1,p_{1,2},\ep_{1,2}$ and $\ept_{1,2}$ defined in \eqref{eq:betasol}-\eqref{eq:ept2sol} solve the equations \eqref{eq:rhl1s}-\eqref{eq:rhr3s}. 

We start with the observation that \eqref{eq:rhl1s} and \eqref{eq:rhm1s} easily follow from \eqref{eq:mu0sol} and \eqref{eq:mu1sol}. In addition to that one immediately verifies \eqref{eq:rhr3s} by looking at \eqref{eq:ep2sol}.

With \eqref{eq:mu0sol} and \eqref{eq:ep1sol} one shows that \eqref{eq:rhl3s} holds. It is a long but straightforward computation which checks \eqref{eq:rhm3s} from \eqref{eq:betasol}, \eqref{eq:mu1sol}, \eqref{eq:ep1sol} and \eqref{eq:ep2sol}.

The verification of \eqref{eq:rhl4s} and \eqref{eq:rhm4s} is again long but straightforward. Hereby one has to use \eqref{eq:mu0sol}, \eqref{eq:ep1sol} and \eqref{eq:ept1sol} for \eqref{eq:rhl4s} and \eqref{eq:mu1sol}-\eqref{eq:ept2sol} for \eqref{eq:rhm4s}.

\subsection{Order of the Speeds} \label{subsec:order_speeds}

Let us now check \eqref{eq:orders}. From \eqref{eq:betavm} we obtain 
$$
\vr_1 (\vr_2-\vr_-) \beta < \vr_- (\vr_2-\vr_1) v_-
$$
which is equivalent to (subtract $\vr_1^2\beta$ on both sides)
$$
(\vr_2 - \vr_1) (\vr_1\beta - \vr_- v_-) < - (\vr_1-\vr_-)\vr_1\beta.
$$
Dividing by $(\vr_2 - \vr_1)(\vr_1-\vr_-)>0$ yields $\mu_0<\mu_1$.

Furthermore it holds that $$\vr_- \vr_1 v_-^2 - (\vr_1 - \vr_-)(p_+ - p_-) > 0$$ which can be simply derived from $\vr_1 < \vr_K$. With this at hand one verifies that 
$$
\vr_-^2 (\vr_2 - \vr_1)^2 v_-^2 > (\vr_2-\vr_1)(\vr_1-\vr_-)\big[(\vr_2-\vr_-)(p_+-p_-)-\vr_-\vr_2v_{-}^2\big].
$$
Hence 
$$
\vr_- (\vr_2 - \vr_1) v_- -\sqrt{(\vr_2-\vr_1)(\vr_1-\vr_-)\big[(\vr_2-\vr_-)(p_+-p_-)-\vr_-\vr_2v_{-}^2\big]} > 0
$$
which shows that $\beta>0$. This implies that $\mu_1 < 0$.

\subsection{Admissibility Conditions} \label{subsec:adm_cond}

From now on we define the pressures $p_{1,2}$ as functions of $\vr_{1,2}$ as follows

\begin{align} \label{eq:p1sol}
p_1 &:= p_- \left(\frac{\vr_1}{\vr_-}\right)^{\frac{c_v+1}{c_v}}, \\ \label{eq:p2sol}
p_2 &:= p_- \left(\frac{\vr_2}{\vr_-}\right)^{\frac{c_v+1}{c_v}}.
\end{align}

This implies 
$$
s(\vr_1,p_1)=s(\vr_2,p_2)=s(\vr_-,p_-)
$$
and hence the inequalities \eqref{eq:admls}, \eqref{eq:admms} hold as equalities.

Now we are left with two parameters $\vr_{1,2}$ and we have expressed the other unknowns in terms of these parameters.

\subsection{Subsolution Condition, Part 1}\label{subsec:subsol_cond_1}

Here we show that the subsolution condition \eqref{eq:sc1s} holds for $\vr_{1,2}$ with $\vr_-<\vr_1<\vr_K<\vr_2$ and $\vr_K-\vr_1$ and $\vr_2-\vr_K$ sufficiently small.

In order to prove that $\ep_2>0$ we have to show that $p_+ - p_2 > 0$ according to \eqref{eq:ep2sol}. We compute
\begin{align*}
	\lim\limits_{\vr_2\to\vr_K}\big(p_+  - p_2\big) &= \lim\limits_{\vr_2\to\vr_K} \left(p_+ - p_- \left(\frac{\vr_2}{\vr_-}\right)^{\frac{c_v + 1}{c_v}} \right) \\
	&=p_+ - p_- \left(\frac{\vr_K}{\vr_-}\right)^{\frac{c_v + 1}{c_v}} \\
	&=p_+ - p_- \left(\frac{p_+-p_-}{p_+-p_--\vr_-v_{-}^2} \right)^{\frac{c_v + 1}{c_v}}
\end{align*}

The right inequality in \eqref{eq:smallness_cond} implies that 
\begin{align*}
	p_+ - p_- - \vr_- v_-^2 &> p_+ - p_- - \frac{2c_v (p_+-p_-)^2}{(2c_v + 1)p_+ + p_-} \\
	&= \frac{p_+ - p_-}{(2c_v + 1)p_+ + p_-} \Big((2c_v + 1)p_+ + p_- - 2c_v (p_+ - p_-)\Big) \\
	&= (p_+ - p_-) \frac{(2c_v + 1)p_- + p_+}{(2c_v + 1)p_+ + p_-}.
\end{align*}

Hence 
\begin{align*}
p_+ - p_- \left(\frac{p_+-p_-}{p_+-p_--\vr_-v_{-}^2} \right)^{\frac{c_v + 1}{c_v}} &> p_+ - p_- \left( \frac{(2c_v + 1)p_+ + p_-}{(2c_v + 1)p_- + p_+}\right)^{\frac{c_v +  1}{c_v}} \\ 
&= p_- \left( \frac{p_+}{p_-} - \left( \frac{(2c_v + 1)\frac{p_+}{p_-} + 1}{(2c_v + 1) + \frac{p_+}{p_-}}\right)^{\frac{c_v +  1}{c_v}} \right).
\end{align*}

\begin{lemma} \label{l:f}
	Define 
	$$
	f(t) := t - \left( \frac{(2c_v + 1)t + 1}{(2c_v + 1) + t}\right)^{\frac{c_v +  1}{c_v}}.
	$$
	Then $f(t)>0$ for all $t>1$.
\end{lemma}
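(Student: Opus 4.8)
The plan is to exploit that $f(1)=0$ (the bracket equals $1$ at $t=1$) and to show that $f$ is strictly increasing on $(1,\infty)$, so that $f(t)>f(1)=0$ there. Taking a logarithm converts the awkward power into a monotonicity statement. Concretely, I would introduce the abbreviation $a:=2c_v+1$, which satisfies $a>2$ since $c_v>\frac12$, and record the identity $\frac{c_v+1}{c_v}=\frac{a+1}{a-1}$. Writing $g(t):=\frac{at+1}{a+t}$, the claim $f(t)>0$ is equivalent to $t>g(t)^{(a+1)/(a-1)}$, i.e.\ to $h(t)>0$, where $h(t):=\log t-\frac{a+1}{a-1}\log g(t)$; note that $h(1)=0$ because $g(1)=1$.

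Next I would differentiate. Since $g'(t)=\frac{a^2-1}{(a+t)^2}$, one finds $h'(t)=\frac1t-\frac{a+1}{a-1}\cdot\frac{a^2-1}{(a+t)(at+1)}$. Bringing this over the common denominator $t(a+t)(at+1)$, which is strictly positive for $t>0$, the sign of $h'(t)$ equals that of the numerator $N(t):=(a+t)(at+1)-\frac{a+1}{a-1}(a^2-1)\,t$. The decisive simplification is that $\frac{a+1}{a-1}(a^2-1)=(a+1)^2$, so expanding gives $N(t)=a t^2-2a t+a=a(t-1)^2\ge0$, with equality only at $t=1$.

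It follows that $h'(t)\ge0$ on $(0,\infty)$ and that $h'$ vanishes only at the single point $t=1$; hence $h$ is strictly increasing, and since $h(1)=0$ we conclude $h(t)>0$, and therefore $f(t)>0$, for every $t>1$. The only delicate point---the \emph{main obstacle}, although it amounts to a short computation---is verifying the factorization $N(t)=a(t-1)^2$. This is precisely where the special relationship between the exponent $\frac{c_v+1}{c_v}$ and the coefficient $2c_v+1$ is used, via $\frac{a+1}{a-1}(a^2-1)=(a+1)^2$, and it is also where the standing hypothesis $c_v>\frac12$, i.e.\ $a>2$, ensures that all denominators stay positive and that $g'>0$.
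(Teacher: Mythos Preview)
Your proof is correct. The paper takes a slightly different route: it differentiates $f$ directly and computes
\[
f'(t)=1-4(c_v+1)^2\,\frac{\big((2c_v+1)t+1\big)^{1/c_v}}{\big((2c_v+1)+t\big)^{2+1/c_v}},\qquad
f''(t)=8(c_v+1)^2(2c_v+1)\,\frac{\big((2c_v+1)t+1\big)^{1/c_v-1}}{\big((2c_v+1)+t\big)^{3+1/c_v}}\,(t-1),
\]
so that $f(1)=f'(1)=0$ and $f''(t)>0$ for $t>1$ give the conclusion. Your logarithmic substitution $h(t)=\log t-\frac{a+1}{a-1}\log g(t)$ has the pleasant feature that a single derivative already suffices: after clearing denominators the numerator factors cleanly as $N(t)=a(t-1)^2$, whereas the paper must pass to the second derivative to isolate the factor $(t-1)$. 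Both arguments ultimately hinge on the same structural coincidence between the exponent $\frac{c_v+1}{c_v}$ and the coefficient $2c_v+1$, which in your notation is the identity $\frac{a+1}{a-1}(a^2-1)=(a+1)^2$. One small remark: your computation only requires $a>1$, i.e.\ $c_v>0$, for the denominators to be nonzero and for $g'>0$; the standing assumption $c_v>\tfrac12$ is not actually used in this lemma (nor is it in the paper's own proof of it).
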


\begin{proof}
	It is simple to check that $f(1) = 0$. The derivatives of $f$ read
	\begin{equation*}
	\begin{split}
		f'(t)&= 1-4(c_v+1)^2 \frac{\left((2c_v+1)t+1\right)^{\frac{1}{c_v}}}{((2c_v + 1) +t)^{2+\frac{1}{c_v}}}\\
		f''(t)&=  8(c_v+1)^2 (2c_v+1) \frac{((2c_v+1)t+1)^{\frac{1}{c_v} -1}}{((2c_v+1)+t)^{3+\frac{1}{c_v}}}(t-1)
	\end{split}
	\end{equation*}
	and hence we have $f'(1)=f''(1) = 0$ and $f''(t)>0$ for $t>1$. This gives the claim.
\end{proof}

Since we have $0 < p_- < p_+$, Lemma \ref{l:f} implies that 
$$
\lim\limits_{\vr_2\to\vr_K}\big(p_+  - p_2\big) > p_- f\left(\frac{p_+}{p_-}\right) > 0,
$$
and hence by continuity $\ep_2 >0$ for all $\vr_2>\vr_K$ with $\vr_2-\vr_K$ sufficiently small. 

It is easy to verify that $\lim\limits_{\vr_1\to\vr_K} \beta = 0$ for all $\vr_2>\vr_K$. With this at hand we obtain
$$
\lim\limits_{\vr_1\to\vr_K} \left(p_- + \frac{\vr_1\vr_-}{\vr_1-\vr_-} (v_- - \beta)^2\right) = p_- + \frac{\vr_K\vr_-}{\vr_K-\vr_-} v_-^2 = p_+. 
$$
Hence the computations above yield 
$$
\lim\limits_{\vr_1\to\vr_K} \left(p_- - p_1 + \frac{\vr_1 \vr_-}{\vr_1 - \vr_-}\big(v_- - \beta\big)^2 \right)>0
$$
and again by continuity we have $\ep_1>0$ for all $\vr_1<\vr_K$ with $\vr_K-\vr_1$ sufficiently small.

\subsection{Subsolution Condition, Part 2} \label{subsec:subsol_cond_2}

To finish the proof of Theorem \ref{t:small} it remains to show that if $\vr_-v_{-}^2 < (p_+-p_-)^2\frac{2c_v}{(2c_v+1)p_+ + p_-}$ where $(p_+-p_-)^2\frac{2c_v}{(2c_v+1)p_+ + p_-} - \vr_-v_{-}^2$ is sufficiently small, for all $\vr_{1,2}$ with $\vr_-<\vr_1<\vr_K<\vr_2$ and $\vr_K-\vr_1$ and $\vr_2-\vr_K$ sufficiently small, the subsolution condition \eqref{eq:sc2s} is fulfilled. 

To this end we consider the formulas for $\ept_{1,2}$, i.e. \eqref{eq:ept1sol}, \eqref{eq:ept2sol} and replace $\vr_{1,2}$ by $\vr_K$ and $ \vr_-v_{-}^2$ by $$(p_+-p_-)^2\frac{2c_v}{(2c_v+1)p_+ + p_-}.$$
For both $\ept_1$ and $\ept_2$ we will end up with the same quantity denoted by $\Et$. We will show that $\Et>0$ which implies the claim by continuity. 

We begin with $\ept_1$ \eqref{eq:ept1sol} and replace $\vr_{1,2}$ by $\vr_K$. The result is denoted by $\Et_1$ and we obtain 
\begin{equation} \label{eq:Et1} 
	\Et_1 = v_-^2 - 2c_v\left(\frac{p_K}{\vr_K} - \frac{p_-}{\vr_-}\right) + 2\frac{(\vr_K - \vr_-) p_-}{\vr_- \vr_K} + \frac{p_K - p_-}{\vr_K} - \frac{\vr_- v_-^2}{\vr_K - \vr_-}
\end{equation}
where $$p_K:= p_- \left(\frac{\vr_K}{\vr_-}\right)^{\frac{c_v + 1}{c_v}}.$$

Replacing $\vr_{1,2}$ by $\vr_K$ in the formula for $\ept_2$ \eqref{eq:ept2sol} as well, we obtain 
\begin{equation} \label{eq:Et2} 
	\Et_2 = v_-^2 - 2c_v \left(\frac{p_K}{\vr_K} - \frac{p_-}{\vr_-}\right) + 2 \frac{(\vr_K - \vr_-)p_-}{\vr_- \vr_K} + \frac{p_K - p_+}{\vr_K}.
\end{equation}
An easy computation shows $$\frac{p_K - p_-}{\vr_K} - \frac{\vr_- v_-^2}{\vr_K - \vr_-} = \frac{p_K - p_+}{\vr_K}$$
and hence $\Et_1 = \Et_2$.

Replacing $\vr_K$ in \eqref{eq:Et1} with the help of \eqref{eq:rhoK} we end up with
\begin{equation} \label{eq:Et1ex}
	\Et_1 = \Et_2 = \frac{p_-}{\vr_-}\left[ (1-2c_v) \left(\frac{p_+ - p_-}{p_+ - p_- - \vr_-v_-^2}\right)^{\frac{1}{c_v}} + 2c_v + \frac{\vr_- v_-^2 (p_- + 2p_+) - p_+(p_+ - p_-)}{p_-(p_+ - p_-)}\right].
\end{equation}
Let us now replace $\vr_-v_{-}^2$ by $(p_+-p_-)^2\frac{2c_v}{(2c_v+1)p_+ + p_-}$ in \eqref{eq:Et1ex}. The resulting quantity is denoted by $\Et$. We obtain
\begin{align*}
\Et &= \frac{p_-(2c_v-1)\left((2c_v+1)p_- + p_+\right)}{\vr_-\left((2c_v+1)p_+ + p_-\right)} \left[\frac{p_+}{p_-} - \left(\frac{(2c_v+1)\frac{p_+}{p_-}+1}{(2c_v+1)+\frac{p_+}{p_-}}\right)^{\frac{c_v + 1}{c_v}}\right] \\
&=\frac{p_-(2c_v-1)\left((2c_v+1)p_- + p_+\right)}{\vr_-\left((2c_v+1)p_+ + p_-\right)} \  f\left(\frac{p_+}{p_-}\right)
\end{align*}
with the same $f$ as in Lemma \ref{l:f}. Since by assumption $c_v>\half$ and because of Lemma \ref{l:f} (note that $p_+>p_-$ by assumption) we deduce $\Et > 0$.

\section{Proof of Theorem \ref{t:main}}\label{s:proof_main} 

Before we continue further, let us reiterate that without loss of generality we can assume $p_- < p_+$. Indeed, if that is not the case, we use the invariance of the Euler system stated in Proposition \ref{p:Galileo2}. We split Theorem \ref{t:main} into four cases.
\begin{itemize}
    \item[(1)] The self-similar solution contains two shocks.
    \item[(2)] The self-similar solution contains one shock and one rarefaction wave and the condition
    \begin{equation}\label{eq:smallcondthm}
        \sqrt{\frac{V(p_-,p_+)}{\vr_-}} < v_- - v_+ < (p_+-p_-)\sqrt{\frac{2c_v}{\vr_-((2c_v+1)p_++p_-)}}
    \end{equation}
    holds, where $V(p_-,p_+)$ is given by Theorem \ref{t:small}.
    \item[(3)] The self-similar solution contains one shock and one rarefaction wave and the condition \eqref{eq:smallcondthm} does not hold. To be precise, this means that the left inequality in \eqref{eq:smallcondthm} is not fulfilled.
    \item[(4)] The self-similar solution contains exactly one shock and no rarefaction wave.
\end{itemize}

The case (1) was solved in \cite[Theorem 3.5]{AKKMM}.

The cases (2), (3) and (4) will be handled using Theorem \ref{t:small}, the Galilean invariance stated in Proposition \ref{p:Galileo} and the patching procedure introduced in \cite{MarKli17}, where the latter is not required in case (2). Details are presented in the following sections.

\subsection{Case (2)} \label{ss:case2}

In order to prove Theorem \ref{t:main} in case (2), it is enough to use Theorem \ref{t:small} and the Galilean invariance stated in Proposition \ref{p:Galileo}. Indeed Theorem \ref{t:small} yields infinitely many solutions to the problem with left state $(\vr_-,\vv_- - \vv_+,p_-)$ and right state $(\vr_+,0,p_+)$. Finally Proposition \ref{p:Galileo} states that these solutions can be shifted by $\vv_+$ to obtain solutions to the problem with left state $(\vr_-,\vv_-,p_-)$ and right state $(\vr_+,\vv_+,p_+)$.

\subsection{Case (3)} \label{ss:case3} 

In order to prove Theorem \ref{t:main} in case (3), we proceed as follows. We start with identifying the middle states of the 1D self-similar solution, $(\vr_{M-},\vv_{M-},p_M)$ and $(\vr_{M+},\vv_{M+},p_M)$, where $u_{M-} = u_-$, $u_{M+} = u_+$ and $v_{M-} = v_{M+} =: v_{M}$. We recall that this means that states $(\vr_-,\vv_-,p_-)$ on the left and $(\vr_{M-},\vv_{M-},p_M)$ on the right are connected by an admissible 1-shock and states $(\vr_{M+},\vv_{M+},p_M)$ on the left and $(\vr_+,\vv_+,p_+)$ on the right are connected by a 3-rarefaction wave. Note that the 1D theory also determines that $v_{M} < \min\{v_-,v_+\}$ and $p_- < p_M < p_+$.

Let $\delta > 0$. We denote $p^\delta := p_M + \delta$ and require $\delta$ small such that $p^\delta < p_+$. With $p^\delta$ given we compute $\vr^\delta$ and $v^\delta$ as
\begin{align*}
	v^\delta &:= v_+ - 2\sqrt{c_v(c_v+1)}\sqrt{\frac{p_+}{\vr_+}}\left(1-\left(\frac{p^\delta}{p_+}\right)^\frac{1}{2(c_v+1)}\right),\\
	\vr^\delta &:= \vr_+ \left(\frac{p^\delta}{p_+}\right)^{\frac{c_v}{c_v + 1}}, 
\end{align*}
which implies that the states $(\vr^\delta,\vv^\delta = (u_+,v^\delta),p^\delta)$ on the left and $(\vr_+,\vv_+,p_+)$ on the right can be connected by a 3-rarefaction wave. 

It is easy to observe that 
\begin{equation} \label{eq:vdelta}
v_- - v^\delta < (p^\delta - p_-)\sqrt{\frac{2c_v}{\vr_-(p_-+(2c_v+1)p^\delta)}}.  
\end{equation} 

From Theorem \ref{t:small} we obtain for each $p^\delta\in[p_M,p_+]$ a $V(p_-,p^\delta)< (p^\delta - p_-)^2\frac{2c_v}{(2c_v+1)p^\delta + p_-}$. For  $p^\delta=p_M$ we get
\begin{equation} \label{eq:VpM}
\begin{split}
	V(p_-,p_M) &<  (p_M - p_-)^2\frac{2c_v}{(2c_v+1)p_M + p_-} \\
	&= \vr_- (v_- - v_M)^2,
\end{split}
\end{equation}
where the latter equality is a consequence of the fact that the states $(\vr_-,\vv_-,p_-)$ and $(\vr_{M-},\vv_{M-},p_M)$ are connected by a 1-shock. Note that we may assume without loss of generality that the map $\delta \mapsto V(p_-,p^\delta)$ is continuous. Note furthermore that for $p^\delta=p_M$ we obtain $v^\delta=v_M$ and that the map $\delta \mapsto v^\delta$ is continuous as well. Hence inequality \eqref{eq:VpM} implies that 
\begin{equation} \label{eq:Vpdelta}
V(p_-,p^\delta) < \vr_- (v_- - v^\delta)^2
\end{equation}
as long as $\delta$ is sufficiently small. Since for sufficiently small $\delta$ we have $v_- - v^\delta >0$, we can combine \eqref{eq:vdelta} and \eqref{eq:Vpdelta} to
\begin{equation*}
\sqrt{\frac{V(p_-,p^\delta)}{\vr_-}} <  v_- - v^\delta < (p^\delta - p_-)\sqrt{\frac{2c_v}{\vr_-(p_-+(2c_v+1)p^\delta)}}.  
\end{equation*}

A sketch of the involved states in the phase space and the shock and rarefaction curves can be found in Figure \ref{fig:phasespace}.

\begin{figure}[hbt] 
	\centering
	\includegraphics[width=0.9\textwidth]{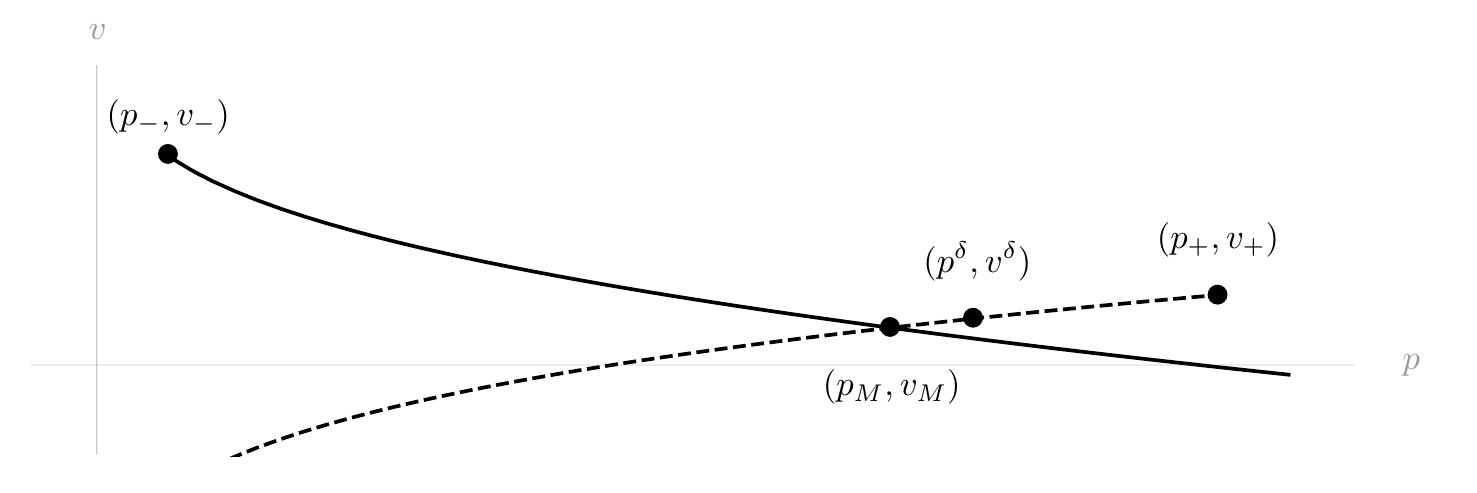}
	\caption{Projection of the 1-shock curve (solid line) of state $(\vr_-,v_-,p_-)$ and the 3-rarefaction curve (dashed line) of state $(\vr_+,v_+,p_+)$ to the $p-v$-plane. The intersection of these two curves represents pressure $p_M$ and velocity $v_M$ of the middle states. Theorem \ref{t:small} can be applied if the state $(\vr_+,v_+,p_+)$ is close to the 1-shock curve. Since this is not the case, we introduce the auxiliary state $( \vr^\delta,v^\delta,p^\delta)$ which can be connected to $(\vr_+,v_+,p_+)$ by a 3-rarefaction wave.}
	\label{fig:phasespace}
\end{figure}

Now we shift all appearing velocities by $\vv^\delta := (u_+,v^\delta)$ and use Theorem \ref{t:small} to obtain infinitely many solutions to the problem with left state $(\vr_-,\vv_--\vv^\delta,p_-)$ and right state $(\vr^\delta,0,p^\delta)$. 

In order to patch together such a solution and the 3-rarefaction wave connecting the states $(\vr^\delta,0,p^\delta)$ on the left and $(\vr_+,\vv_+-\vv^\delta,p_+)$ on the right, we have to check if the rarefaction wave does not interfere with the regions $\Omega_1,\Omega_2$ of the fan partition related to the solutions given by Theorem \ref{t:small}. In other words we have to show that the rarefaction wave lies in the region $\Omega_+$, where the solutions given by Theorem \ref{t:small} are constant. It is well-known (see classical monographs \cite{Dafermos16} or \cite{Smoller67}) that the left borderline of the 3-rarefaction wave is equal to $\lambda_3(\vr^\delta,0,p^\delta)$, where $\lambda_3$ is the third characteristic speed of the Euler system expressed in \eqref{eq:eigenvalues}. We immediately observe
\begin{equation*}
\lambda_3(\vr^\delta,0,p^\delta) = \sqrt{\frac{c_v+1}{c_v}\frac{p^\delta}{\vr^\delta}} > 0.
\end{equation*}
Since for the right borderline of the region $\Omega_2$ (which was denoted by $\mu_2$) we have $\mu_2=0$ according to \eqref{eq:r.cd}, we conclude the desired relation $\mu_2 < \lambda_3(\vr^\delta,0,p^\delta)$. Hence we have infinitely many solutions to the problem with $(\vr_-,\vv_- - \vv^\delta,p_-)$ as left state and $(\vr_+,\vv_+ - \vv^\delta,p_+)$ as right state, an example of which is shown in Figure \ref{fig:solution}. 

\begin{figure}[hbt] 
	\centering
	\includegraphics[width=0.7\textwidth]{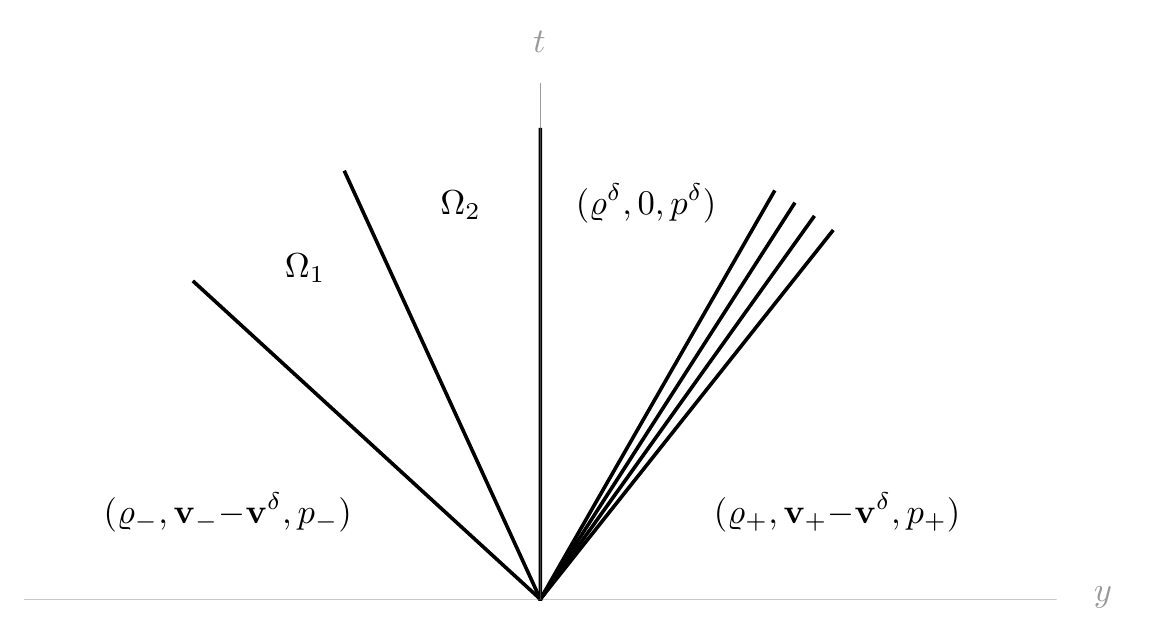} 
	\caption{Example of a solution to the problem with left state $(\vr_-,\vv_- - \vv^\delta,p_-)$ and right state $(\vr_+,\vv_+ - \vv^\delta,p_+)$ as constructed in Section \ref{ss:case3}. The solutions constructed in Section \ref{ss:case4} are similar. The only difference is that the rarefaction is replaced by a shock.} 
	\label{fig:solution}
\end{figure}

Finally we use the Galilean invariance stated in Proposition \ref{p:Galileo} to obtain infinitely many admissible weak solutions to the original problem with left state $(\vr_-,\vv_-,p_-)$ and right state $(\vr_+,\vv_+,p_+)$.

\subsection{Case (4)} \label{ss:case4} 

The proof of Theorem \ref{t:main} in case (4) is similar to the one in case (3) presented in Section \ref{ss:case3}. Since the self-similar solution contains only an admissible 1-shock and a possible contact discontinuity, the states $(\vr_-,\vv_-,p_-)$ and $(\vr_+,\vv_+,p_+)$ satisfy
\begin{equation*}
v_- - v_+ = (p_+ - p_-)\sqrt{\frac{2c_v}{\vr_-(p_-+(2c_v+1)p_+)}}.  
\end{equation*}

Again we take $\delta > 0$ sufficiently small and denote now $p^\delta := p_+ + \delta$. We compute  $\vr^\delta$ and $v^\delta$ as 
\begin{align}
	v^\delta &:= v_+ +  (p^\delta - p_+)\sqrt{\frac{2c_v}{\vr_+(p_+ + (2c_v+1)p^\delta)}}, \label{eq:4vdelta}\\
	\vr^\delta &:=\vr_+ \frac{(2c_v +1) p^\delta + p_+}{(2c_v +1) p_+ + p^\delta}. \label{eq:4rhodelta}
\end{align}
Then $(\vr^\delta,\vv^\delta=(u_+,v^\delta),p^\delta)$ on the left and $(\vr_+,\vv_+,p_+)$ on the right can be connected by an admissible 3-shock. 

With similar arguments as in Section \ref{ss:case3} we obtain
\begin{equation*}
\sqrt{\frac{V(p_-,p^\delta)}{\vr_-}} <  v_- - v^\delta< (p^\delta - p_-)\sqrt{\frac{2c_v}{\vr_-(p_-+(2c_v+1)p^\delta)}}  
\end{equation*}
as long as $\delta > 0$ is small enough, where $V$ is given by Theorem \ref{t:small}. 

Again we show a sketch of the setting in the phase space, see Figure \ref{fig:phasespace-shock}.

\begin{figure}[hbt] 
	\centering
	\includegraphics[width=0.9\textwidth]{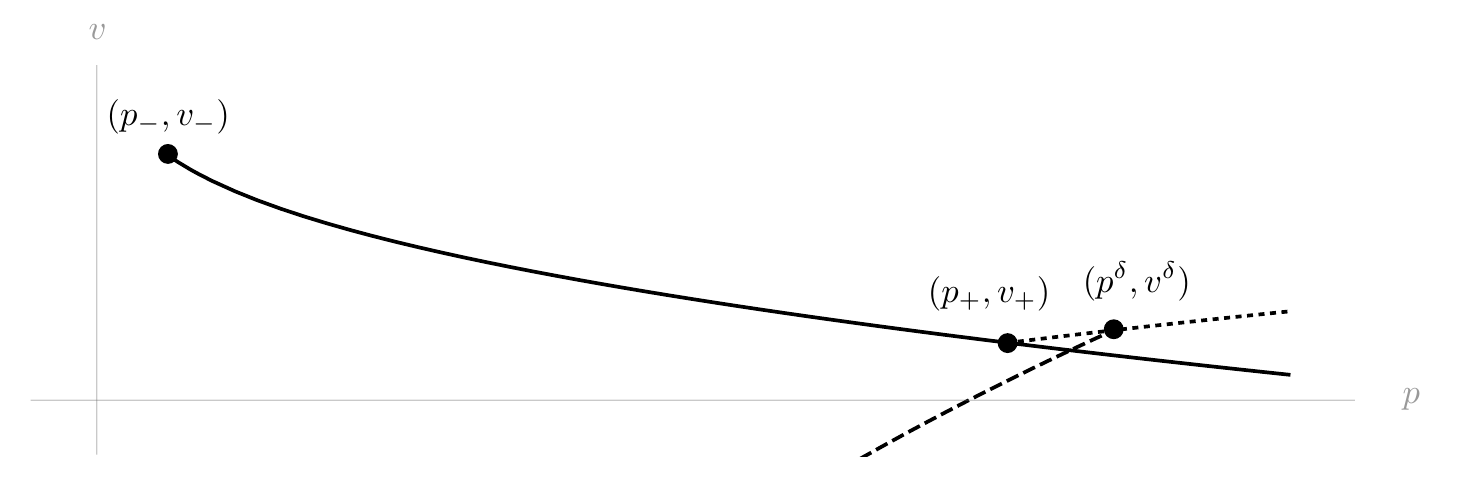}
	\caption{Projection of the 1-shock curve (solid line) of state $(\vr_-,v_-,p_-)$, the 3-shock curve (dotted line) of state $(\vr_+,v_+,p_+)$ and the 3-rarefaction curve (dashed line) of state $(\vr^\delta,v^\delta,p^\delta)$ to the $p-v$-plane.}
	\label{fig:phasespace-shock}
\end{figure}

We shift all appearing velocities by $\vv^\delta = (u_+,v^\delta)$. Then we use Theorem \ref{t:small} to obtain infinitely many admissible weak solutions to the problem with left state $(\vr_-,\vv_--\vv^\delta,p_-)$ and right state $(\vr^\delta,0,p^\delta)$. 

Our aim is again to patch together such a solution and the admissible 3-shock connecting the states $(\vr^\delta,0,p^\delta)$ on the left and $(\vr_+,\vv_+ - \vv^\delta, p_+)$ on the right. To this end we have to check if the 3-shock does not interfere with the regions $\Omega_1,\Omega_2$ of the fan partition related to the solutions given by Theorem \ref{t:small}. With the Rankine-Hugoniot condition the shock speed $\sigma$ is given by 
$$
\sigma = \frac{- \vr_+(v_+ - v^\delta)}{\vr^\delta - \vr_+}
$$
From \eqref{eq:4rhodelta} we obtain $\vr^\delta>\vr_+$, and from \eqref{eq:4vdelta} we get $v^\delta>v_+$. With this at hand we deduce the desired relation $\sigma>0$.

This yields infinitely many admissible weak solutions to the problem with left state $(\vr_-,\vv_- - \vv^\delta,p_-)$ and right state $(\vr_+,\vv_+ - \vv^\delta,p_+)$, which finishes -- together with the Galilean invariance (Proposition \ref{p:Galileo}) -- the proof.

\section{Concluding Remarks}\label{s:conclude}

\begin{remark}\label{r:multiD}
The results of this paper naturally extend to any space dimension larger than 1.
\end{remark}

\begin{remark}\label{r:CD2} 
As we pointed out in the Introduction and in Section \ref{ss:ansatz}, our ansatz for the subsolution is made in such a way, that the right interface plays the role of the contact discontinuity. This is somewhat counterintuitive and it seems more natural to look for subsolutions where the middle interface plays the role of the contact discontinuity, as it was for subsolutions constructed in \cite{AKKMM} for the case of self-similar solutions containing two shocks. However, despite our efforts we were not able to prove that any such subsolutions exist if the self-similar solution contains just one shock and one rarefaction wave.
\end{remark}

\begin{remark}\label{r:CDopen} 
Similarly as it is in the case of the isentropic Euler system with power law pressure, it remains an open question whether a self-similar solution consisting only of a single contact discontinuity or a contact discontinuity together with rarefaction waves is unique in the set of multi-dimensional bounded admissible weak solutions.
\end{remark}

\begin{remark}\label{r:summary} 
	Let us summarize the (non-)uniqueness results to our problem. The possible structures of the 1D self-similar solution are shown in Table \ref{table}. The 1D self-similar solution is unique in cases 1, 3, 7 and 9. There exist infinitely many admissible weak solutions in cases 2, 4, 5, 6, 8, 11, 13, 14, 15 and 17. As already pointed out in Remark \ref{r:CDopen}, the remaining cases 10, 12, 16 and 18 are open.
\end{remark}

\section*{Acknowledgements}
The research leading to these results was partially supported by the European Research Council under the European Union's Seventh Framework Programme (FP7/2007-2013)/ ERC Grant Agreement 320078. O. Kreml and V. M\'acha were supported by the GA\v CR (Czech Science Foundation) project GJ17-01694Y in the general framework of RVO: 67985840.


\begin{thebibliography}{99}

\bibitem{AKKMM} Al Baba, H., Klingenberg, Ch., Kreml, O., M\'acha, V., Markfelder, S.: {\em Non-uniqueness of admissible weak solution to the Riemann problem for the full Euler system in 2D}. Preprint (2018), submitted (under 2nd round review), arXiv:1805.11354
\bibitem{BrChKr} B\v rezina, J., Chioradoli, E., Kreml, O.: {\em Contact discontinuities in multi-dimensional isentropic Euler equations}. Electron. J. Differential Equations, 2018(94) (2018) 1--11.
\bibitem{BrKrMa} B\v rezina, J., Kreml, O., M\'acha, V.: {\em Non-uniqueness of delta shocks and contact discontinuities in the multi-dimensional model of Chaplygin gas}. Preprint (2018), arXiv:1809.05342.
\bibitem{Chen} Chen, G.-Q., Chen, J.: {\em Stability of rarefaction waves and vacuum states for the multidimensional Euler equations}, J. Hyperbolic Differ. Equ. 4(1) (2007) 105--122.
\bibitem{Ch} Chiodaroli, E.: {\em A counterexample to well-posedness of entropy solutions to the compressible Euler system}, J. Hyperbolic Differ. Equ. 11(3) (2014) 493--519.
\bibitem{ChiDelKre15} Chiodaroli, E., De Lellis, C., Kreml, O.: {\em Global ill-posedness of the isentropic system of gas dynamics}, Comm. Pure Appl. Math. 68(7) (2015) 1157--1190.
\bibitem{ChiKre14} Chiodaroli, E., Kreml, O.: {\em On the energy dissipation rate of solutions to the compressible isentropic Euler system}, Arch. Rational Mech. Anal. 214(3) (2014) 1019--1049.
\bibitem{ChiKre17} Chiodaroli, E., Kreml, O.: {\em Non-uniqueness of admissible weak solutions to the Riemann problem for the isentropic Euler equations}, Nonlinearity 31(4) (2018) 1441--1460.
\bibitem{CKMS} Chiodaroli, E., Kreml, O., M\' acha, V., Schwarzacher S.: {\em Non-uniqueness of admissible weak solutions to the compressible Euler equations with smooth initial data}, Preprint (2019), arXiv:1812.09917.
\bibitem{Dafermos16} Dafermos, C. M.: {\em Hyperbolic conservation laws in continuum physics}, vol. 325 of Grundleheren der Mathematischen Wissenschaften [Fundamental Principles of Mathematical Sciences]. Third edition. Springer-Verlag, Berlin (2010).
\bibitem{DLSz1} De Lellis, C., Sz\'ekelyhidi, L.: {\em The Euler equations as a differential inclusion}, Ann. of Math. (2) 170(3) (2009) 1417--1436.
\bibitem{DLSz2} De Lellis, C., Sz\'ekelyhidi, L.: {\em On admissibility criteria for weak solutions of the Euler equations}, Arch. Rational Mech. Anal. 195(1) (2010) 225--260.
\bibitem{Fe} Feireisl, E.: {\em Maximal dissipation and well-posedness for the compressible Euler system}, J. Math. Fluid Mech. 16(3) (2014) 447--461.
\bibitem{FeKlKrMa} Feireisl, E., Klingenberg, C., Kreml, O., Markfelder, S.: {\em On oscillatory solutions to the complete Euler system}, Preprint (2017), arXiv:1710.10918.
\bibitem{FeiKre} Feireisl, E., Kreml, O.: {\em Uniqueness of rarefaction waves in multidimensional compressible Euler system}, J. Hyperbolic Differ. Equ. 12(3) (2015) 489--499.
\bibitem{FeiKreVas} Feireisl, E., Kreml, O., Vasseur, A.: {\em Stability of the isentropic Riemann solutions of the full multidimensional Euler system}, SIAM J. Math. Anal. 47(3) (2015) 2416--2425.
\bibitem{MarKli17} Klingenberg, C., Markfelder, S.: {\em The Riemann problem for the multidimensional isentropic system of gas dynamics is ill-posed if it contains a shock}, Arch. Rational Mech. Anal. 227(3) (2018) 967--994.
\bibitem{MaKl2} Klingenberg, C., Markfelder, S.: {\em Non-uniqueness of energy-conservative solutions to the isentropic compressible two-dimensional {E}uler equations}, J. Hyperbolic Differ. Equ. 15(4) (2018) 721--730.
\bibitem{Smoller67} Smoller, J.: {\em Shock waves and reaction-diffusion equations}, vol. 258 of Grundlehren der Mathematischen Wissenschaften [Fundamental Principles of Mathematical Science]. Second edition. Springer-Verlag, New York (1994).
\bibitem{sz} Sz\'ekelyhidi, L.: {\em Weak solutions to the incompressible Euler equations with vortex sheet initial data}, C. R. Math. Acad. Sci. Paris 349(19-20) (2011) 1063--1066.

\end{thebibliography}
\end{document}